\documentclass[10pt,a4paper]{article}
\usepackage[margin=24mm]{geometry}
\usepackage{newtxtext,newtxmath}

\usepackage{dsfont}
\usepackage{amsthm,mathtools}
\usepackage{graphicx}
\usepackage{placeins}
\usepackage[numbers,sort&compress]{natbib}
\usepackage{microtype}
\usepackage{xcolor}
\usepackage{etoolbox}
\usepackage{hyperref}
\hypersetup{hidelinks}
\usepackage{setspace}
\usepackage{xspace}
\definecolor{revcolor}{RGB}{0,70,160}
\definecolor{extcolor}{RGB}{0,110,110}
\definecolor{anncolor}{RGB}{160,0,20}

\newcommand{\ann}[1]{#1}
\newcommand{\TSTSMplain}{TSTS-M\textsuperscript{++}}
\newcommand{\TSTSM}{\texorpdfstring{\TSTSMplain}{TSTS-M++}\xspace}
\newcommand{\Diff}{\mathrm{D}}
\newcommand{\Id}{\mathds{1}}
\newcommand{\Cof}{\operatorname{Cof}}

\makeatletter
\def\ann@currentkey{}
\def\ann@bibitem#1{%
  \gdef\ann@currentkey{#1}%
  \ifcsdef{annbib@#1}{\normalcolor}{\normalcolor}%
  \origbibitem{#1}%
  \ifcsdef{annbib@#1}{\normalcolor}{\normalcolor}%
}
\def\ann@lbibitem[#1]#2{%
  \gdef\ann@currentkey{#2}%
  \ifcsdef{annbib@#2}{\normalcolor}{\normalcolor}%
  \origbibitem[#1]{#2}%
  \ifcsdef{annbib@#2}{\normalcolor}{\normalcolor}%
}
\AtBeginDocument{%
  \let\origbibitem\bibitem
  \renewcommand{\bibitem}{\@ifnextchar[\ann@lbibitem\ann@bibitem}%
  \providecommand{\doi}[1]{doi:\ \url{https://doi.org/#1}}%
  \let\origdoi\doi
  \renewcommand{\doi}[1]{%
    \ifcsdef{anndoi@\ann@currentkey}{\origdoi{#1}}{\origdoi{#1}}%
  }%
}
\makeatother

\newtheorem{theorem}{Theorem}

\begin{document}
\title{A globally defined polyconvex isotropic energy satisfying the true-stress-true-strain monotonicity condition (\TSTSM)}
\author{Dongxin Bai$^{\mathrm{a}}$\quad Yunhao Wu$^{\mathrm{a}}$\quad Yong Li$^{\mathrm{a},\mathrm{b},\ast}$\quad Kai Zhang$^{\mathrm{a},\ast}$\quad\ann{and\quad Patrizio Neff$^{\mathrm{c}}$}\\[0.4em]{\normalsize $^{\mathrm{a}}$School of Aerospace Engineering and Applied Mechanics, Tongji University, Shanghai 200092, China}\\[0.2em]{\normalsize $^{\mathrm{b}}$Department of Chemistry and Bioscience, Aalborg University, 9220 Aalborg East, Denmark}\\[0.2em]{\normalsize $^{\mathrm{c}}$Chair of Nonlinear Analysis and Modelling, University of Duisburg-Essen,}\\[0.15em]{\normalsize Thea-Leymann-Stra{\ss}e 9, 45127 Essen, Germany}\\[0.3em]{\normalsize $^{\ast}$Corresponding authors. \texttt{25131@tongji.edu.cn}, \texttt{kaizhang@tongji.edu.cn}}}
\date{3 September 2026}
\maketitle
\begin{abstract}

Polyconvexity is a standard ingredient in the variational existence theory of finite elasticity, whereas true-stress-true-strain monotonicity (\TSTSMplain) requires a positive incremental Cauchy-stress response. These two constitutive restrictions are independent, and Wollner, Holzapfel and Neff left open whether a compressible isotropic energy defined on the whole of $\mathrm{GL}^+(3)$ can satisfy both. We give an explicit affirmative answer. For every $\mu>0$ and $k>0$, the stored-energy function

\begin{equation}
W_k(F)
=\frac{\mu}{2k}
\bigl[\exp\bigl(k(\lVert F\rVert^2+3J^{-1}+J-7)\bigr)-1\bigr],
\qquad
J=\det F,
\end{equation}
is polyconvex and strictly rank-one convex. Its Cauchy-stress response satisfies \TSTSM globally if and only if $k\ge1/(8\sqrt3)$. In this regime every symmetric Cauchy stress corresponds to a unique positive-definite stretch, while the reference stretch is stress free with positive infinitesimal shear and bulk moduli. Stress bijectivity has the strictly smaller sharp threshold $k_{\rm B}\approx0.00827233304$, defined below: at equality the stress map is a global homeomorphism with a nondifferentiable inverse, and above it the map is a global $C^\infty$ diffeomorphism. Thus, for $k_{\rm B}\le k<1/(8\sqrt3)$, the map $V\mapsto\sigma(V)$ remains globally bijective while \TSTSM fails at finite strain. In the \TSTSM regime, every prescribed inner radius of a finite plane-strain annulus with a traction-free outer wall has a unique radial equilibrium, and its inner pressure increases smoothly and strictly from zero to infinity. Comparison with the Ciarlet--Geymonat neo-Hooke, exponentiated Hencky, and stress-free $I_1$-exponential energies separates the roles of polyconvexity, \TSTSM, reduced convexity, and endpoint growth; for $W_k$ these ingredients yield the complete annular theorem. Under incompressible spherical inflation, global pressure inversion holds for the Demiray family with $b\ge b_*$, including $W_k|_{J=1}$ for $k\ge k_*$ and $W_{\exp}|_{J=1}$, and for the exponentiated Hencky energy when $k_{\mathrm{eH}}>3/8$; incompressible neo-Hooke instead has a limit point.

\end{abstract}
\noindent\textbf{Keywords.} polyconvexity; \TSTSM; Cauchy stress; stress bijectivity; isotropic hyperelasticity; plane-strain annulus; Demiray-Fung elasticity; corotational stability postulate; logarithmic strain

\noindent\textbf{Mathematics Subject Classification (2020).} 74B20, 74A20, 49J45.

\section{Introduction}

In 1956, Truesdell formulated the unsolved Hauptproblem of finite elasticity as the search for ``the class of functions that may serve as strain energy densities, for perfectly elastic materials'' \citep{Truesdell1956}. Truesdell and Noll subsequently placed this question in the framework of constitutive inequalities: beyond thermodynamic compatibility, one seeks a priori conditions that exclude unreasonable elastic responses \citep{TruesdellNoll1965}. Because the second law alone does not restrict the specific form of $W$, additional constitutive restrictions must be selected. This paper addresses the resulting modern constitutive problem posed by Neff: whether a compressible isotropic hyperelastic energy defined on the whole of $\mathrm{GL}^+(3)$ can combine variational admissibility, global monotonicity and invertibility of the Cauchy-stress response, and a stable infinitesimal Hooke law.

Polyconvexity provides a standard route to the existence of minimizers by the direct method. Under suitable coercivity, growth, and boundary assumptions, convexity of $W$ in the independent minors $(F,\Cof F,\det F)$ gives weak lower semicontinuity and hence variational existence \citep{Ball1976,Ciarlet1988}. Polyconvexity implies quasiconvexity \citep{Morrey1952} and rank-one convexity, the latter yielding the Legendre-Hadamard condition. Classical compressible polyconvex constructions and invariant criteria are available \citep{CiarletGeymonat1982,Steigmann2003}. These results control the dependence of the energy on the deformation gradient and its minors, but they do not decide whether the Cauchy-stress map is strictly monotone with respect to Hencky strain (the logarithmic strain).

A second line of development addresses the stress response directly. Hill's constitutive inequalities are expressed through the Zaremba-Jaumann rate of the Kirchhoff stress and, for hyperelasticity, relate monotonicity of the Kirchhoff-stress/Hencky-strain map to convexity of the energy as a function of $\log V$ \citep{Hill1968,Hill1970}. Leblond subsequently imposed the analogous rate inequality on Cauchy stress and introduced the true-stress-true-strain monotonicity condition now denoted \TSTSM \citep{Leblond1992}. Let $X=\log V$ be the Eulerian Hencky strain and set $\widehat\sigma(X)=\sigma(e^X)$. The condition requires
\begin{equation}
\label{eq:tsts-local}
\bigl\langle \Diff_X\widehat\sigma(X).Z,\,Z\bigr\rangle>0
\qquad\forall X\in\operatorname{Sym}(3)
\qquad\forall Z\in\operatorname{Sym}(3)\setminus\{0\}.
\end{equation}
This inequality states that the incremental Cauchy stress has a strictly positive pairing with every nonzero Hencky-strain increment. For a perfect fluid it reduces to the classical requirement that pressure increase with density.

The fundamental property underlying the incremental positivity is the corotational stability postulate (CSP)
\begin{equation*}
\bigl\langle \tfrac{\mathrm{D}^{\circ}}{\mathrm{D}t}\sigma,\,D\bigr\rangle>0
\qquad\text{for every }D\neq 0,
\end{equation*}
where $D$ is the rate of deformation tensor and $\frac{\mathrm{D}^{\circ}}{\mathrm{D}t}$ denotes any suitable corotational rate \citep{Neff2025CSP}.
The finite form of \TSTSM reads
\begin{equation*}
\bigl\langle\widehat\sigma(\log V_1)-\widehat\sigma(\log V_2),\,\log V_1-\log V_2\bigr\rangle>0
\qquad\text{for all }V_1\neq V_2\in\operatorname{Sym}^{++}(3),
\end{equation*}
and \eqref{eq:tsts-local} is the corresponding local condition.
The equivalence between CSP and \TSTSM holds for every suitable corotational rate, not only for the Zaremba-Jaumann rate \citep{VossMartinNeffInPrep}.

The simplest explicit constitutive law satisfying \TSTSM is Hencky's 1928 Cauchy-elastic relation \citep{Hencky1928},
\begin{equation}
\sigma(V)=\widehat\sigma(\log V)=2\mu\log V+\lambda\operatorname{tr}(\log V)\Id,
\qquad \mu>0,\quad 2\mu+3\lambda>0.
\end{equation}
Its derivative with respect to $X=\log V$ is the positive-definite isotropic Hooke tensor. This law is not hyperelastic and is not globally Legendre-Hadamard elliptic.

At the level of local constitutive restrictions, Leblond showed that his Cauchy-stress rate inequality
\begin{equation*}
\ann{
\bigl\langle \tfrac{\mathrm{D}^{\mathrm{ZJ}}}{\mathrm{D}t}\sigma,\,D\bigr\rangle>0
\qquad\text{for every }D\neq 0
}
\end{equation*}
and polyconvexity are independent in general \citep{Leblond1992}. To the best of our knowledge, the earliest explicit hyperelastic energy known to satisfy \TSTSM globally is the full-logarithmic-norm exponentiated Hencky energy $W_{\mathrm{eH}}(F)=\frac{\mu}{k_{\mathrm{eH}}}\exp(k_{\mathrm{eH}}\lVert\log V\rVert^2)$, with $k_{\mathrm{eH}}>3/8$, introduced by Neff, Ghiba and Lankeit \citep{Neff2015ExpHencky}. Martin, Ghiba and Neff subsequently proved that no strictly increasing outer function of $\lVert\log U\rVert^2$ yields a rank-one-convex energy on all of $\mathrm{GL}^+(3)$ \citep{Martin2018}. Although $W_{\mathrm{eH}}$ is not globally rank-one convex, its Cauchy shear stress remains monotone increasing in simple shear. Conversely, the function $F\mapsto\exp(\lVert F\rVert^2-2\log\det F)$ is polyconvex but fails \TSTSM. Taken together, these examples show that an exponential envelope alone does not reconcile polyconvexity with \TSTSM.

Studies surrounding the 2024 IUTAM Symposium established the links between \TSTSM, corotational stability, and local stress invertibility \citep{Neff2024}. d'Agostino et al. proved, for isotropic Cauchy elasticity, the equivalence between the Zaremba-Jaumann corotational stability condition and \TSTSM \citep{DAgostino2025}. Neff et al. placed this equivalence in the hypoelastic rate formulation, established the corresponding result for the logarithmic rate, and identified as a major open question the search for an objective isotropic energy that is polyconvex, or at least Legendre-Hadamard elliptic, while satisfying the corotational stability postulate globally \citep{Neff2025}. A companion study derived positive incremental Cauchy moduli along homogeneous diagonal deformations and local invertibility of the Cauchy stress-stretch relation \citep{Neff2025CSP}. A later rate-form analysis, alluded to above, showed how the same positivity enters the induced fourth-order tangent stiffness of the spatial equilibrium system and exhibited a uniformly positive-definite Cauchy-elastic example \citep{Neff2026RateForm}, thereby establishing the analytical role of the corotational stability postulate in rate-form equilibrium. Collectively, these works clarified the differential condition and its local analytical consequences; the unresolved step was a finite-valued polyconvex energy with global stress inversion.

In their IUTAM report, Neff and coauthors formulated this compatibility problem in terms of four requirements; the report was subsequently circulated on ResearchGate \citep{Neff2024}. The three-dimensional challenge asks for a compressible isotropic energy $W$ that simultaneously satisfies: (a) $W$ is defined on all of $\mathrm{GL}^+(3)$ and is polyconvex, or at least rank-one convex; (b) its Cauchy stress satisfies \TSTSM globally with respect to Hencky strain; (c) $V\mapsto\sigma(V)$ is a bijection from $\operatorname{Sym}^{++}(3)$ onto $\operatorname{Sym}(3)$; and (d) the reference-state linearization is an isotropic Hooke law with positive shear and bulk moduli. A nonexistence proof is also admitted. Since isotropy makes the deformation gradient $F$ and its right composition $FQ$ with a rotation $Q\in\mathrm{SO}(3)$ produce the same Cauchy stress, requirement (c) is stated in the positive-definite stretch variable $V$.

Subsequent investigations \citep{Ghiba2026} clarified why local diagnostics and separate stability conditions do not resolve this challenge. Ghiba et al. numerically compared local invertibility and local monotonicity for a collection of named isotropic energies and discussed the distinction between local and global inversion. A nonsingular stress tangent gives only local invertibility; \TSTSM integrates on the convex space $\operatorname{Sym}(3)$ to strict Hilbert monotonicity and hence global injectivity, while bijectivity additionally requires every symmetric stress to be attained. In a complementary direction, Wollner, Holzapfel and Neff restated the IUTAM problem as Challenge (i) and made the independence of polyconvexity and \TSTSM concrete through explicit examples \citep{Wollner2026}. Their three constructions satisfying both conditions are finite only on domains defined by $\lVert F\rVert^\alpha<\beta$, $\lVert\Cof F\rVert^\alpha<\beta$, or $\log^2\det F<\beta$, and take the value $+\infty$ outside those domains. The full-domain finite-valued problem therefore remained open in that reference.

Under incompressibility, the compatibility question has a sharper but dimension-dependent structure. Wollner et al. proved concurrent polyconvexity and the corresponding TSTS monotonicity condition for a broad class covered by Ball-type sufficient parameterizations \citep{WollnerKlein2026Concurrent}. On $\mathrm{SL}(2)$, every differentiable, objective and isotropic polyconvex energy satisfies the weak Hill inequality \citep{GhibaWollnerNeff2027}. In three dimensions, however, an explicit counterexample shows that incompressible polyconvexity alone does not imply TSTS monotonicity \citep{Klein2026}. These results establish compatibility in important constrained classes without resolving the compressible full-domain challenge.

The present paper gives an explicit affirmative answer to Challenge (i). For every $\mu>0$ and $k>0$, the energy $W_k$ defined in Section 2 is finite-valued, smooth, polyconvex, and strictly rank-one convex on the whole of $\mathrm{GL}^+(3)$; its reference stretch is stress-free and its infinitesimal moduli are positive. If $k\ge k_*:=1/(8\sqrt3)$, its Cauchy-stress response satisfies \TSTSM globally and $V\mapsto\sigma(V)$ is bijective. Thus, to the best of our knowledge, $W_k$ is the first explicit finite-valued smooth energy on all of $\mathrm{GL}^+(3)$ that satisfies the four requirements simultaneously. The same family separates two sharp thresholds: $k_*$ controls \TSTSM, whereas stress bijectivity is already attained at the strictly smaller value $k_{\rm B}$.

For the traction-free plane-strain annulus and $k\ge k_*$, Appendix~B proves that $a\mapsto p_i(a)$ is a bijection from $[A,\infty)$ onto $[0,\infty)$. It compares $W_k$ with the polyconvex Ciarlet--Geymonat neo-Hooke law, the \TSTSM exponentiated Hencky law, and a polyconvex stress-free $I_1$ exponential that fails \TSTSM. For $W_k$, radial existence and uniqueness, strict pressure increase, and unbounded growth combine into the global bijection. For incompressible spherical inflation, Appendix~C proves global pressure inversion for the Demiray family with $b\ge b_*$, including $W_k|_{J=1}$ for $k\ge k_*$ and $W_{\exp}|_{J=1}$; the exponentiated Hencky energy has the same inversion for $k_{\mathrm{eH}}>3/8$ by a separate closed formula. Thus constitutive laws with different three-dimensional polyconvexity and \TSTSM properties share the same spherical-membrane pressure inversion.

The core constitutive conclusions are as follows.

\begin{theorem}\label{thm:1} Let $\mu>0$ and $k>0$. The energy $W_k$ introduced in Section 2 is smooth ($C^\infty$) on $\mathrm{GL}^+(3)$, objective and isotropic, polyconvex and strictly rank-one convex. The reference stretch is stress free, with infinitesimal moduli $\mu$ and $\kappa=8\mu/3$. If $k\ge k_*:=1/(8\sqrt3)$, its Cauchy-stress response satisfies \TSTSM globally and the map $V\mapsto\sigma(V)$ is a bijection from $\operatorname{Sym}^{++}(3)$ onto $\operatorname{Sym}(3)$.

\end{theorem}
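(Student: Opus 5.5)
The proof separates into an algebraic part (smoothness, objectivity, isotropy, polyconvexity, rank-one convexity, reference moduli), an analytic part (global \TSTSM for $k\ge k_*$), and a topological part (bijectivity). Write $W_k=\Phi_k(g)$ with $\Phi_k(s)=\frac{\mu}{2k}(e^{ks}-1)$ and
\[
g(F)=\lVert F\rVert^2+h(J),\qquad h(J)=3J^{-1}+J-7 .
\]

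\textbf{Algebraic part.} Smoothness on $\mathrm{GL}^+(3)$ is immediate. Objectivity and isotropy hold because $g$ depends only on $\lVert F\rVert^2=\operatorname{tr}B$ and $\det F$.

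For polyconvexity, note that $(F,J)\mapsto\lVert F\rVert^2+h(J)$ is convex on $\mathbb{R}^{3\times3}\times(0,\infty)$, since $h''(J)=6J^{-3}>0$. The function $\Phi_k$ is convex and increasing, so $W_k=\Phi_k\circ g$ is convex in $(F,\Cof F,\det F)$.

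For strict rank-one convexity, take a rank-one direction $H=a\otimes b$. Then $t\mapsto\det(F+tH)$ is affine, so
\[
\tfrac{d^2}{dt^2}g(F+tH)=2\lVert H\rVert^2+h''(J)\,(\Cof F:H)^2>0 .
\]
Hence
\[
\tfrac{d^2}{dt^2}W_k=\tfrac{\mu}{2}e^{kg}\bigl[g''+k\,(g')^2\bigr]>0 .
\]

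For the reference state, the Kirchhoff stress is
\[
\tau=\mathrm{D}W_k\,F^T=\tfrac{\mu}{2}e^{kg}\bigl(2B+(J-3J^{-1})\Id\bigr).
\]
At $F=\Id$ we have $g=0$ and $\tau=0$. Linearizing with $B\approx\Id+2\varepsilon$ and $J-3J^{-1}\approx-2+4\operatorname{tr}\varepsilon$ gives
\[
\tau\approx 2\mu\varepsilon+2\mu(\operatorname{tr}\varepsilon)\Id .
\]
Thus $\lambda=2\mu$ and $\kappa=\lambda+\tfrac23\mu=\tfrac83\mu$.

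\textbf{\TSTSM.} In Hencky variables $X=\log V$ the Cauchy stress is
\[
\widehat\sigma(X)=\tfrac{\mu}{2}\,e^{kg(X)-\operatorname{tr}X}\bigl(2e^{2X}+(e^{\operatorname{tr}X}-3e^{-\operatorname{tr}X})\Id\bigr),
\qquad
g(X)=\operatorname{tr}e^{2X}+3e^{-\operatorname{tr}X}+e^{\operatorname{tr}X}-7 .
\]
This is an isotropic tensor function of $X$. I would use the standard spectral criterion, as in the cited works of Neff et al.: $\mathrm{D}_X\widehat\sigma$ is positive definite on $\operatorname{Sym}(3)$ if and only if two conditions hold.
\begin{itemize}
\item[(i)] The symmetric part of the principal Jacobian $\bigl(\partial\sigma_i/\partial x_j\bigr)$ is positive definite.
\item[(ii)] The shear divided differences satisfy $(\sigma_i-\sigma_j)/(x_i-x_j)>0$ for $x_i\ne x_j$.
\end{itemize}

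Condition (ii) is easy. We have $\sigma_i-\sigma_j=\mu e^{kg-\operatorname{tr}X}(e^{2x_i}-e^{2x_j})$, which has the sign of $x_i-x_j$.

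Condition (i) is the main obstacle. Factor out $\tfrac{\mu}{2}e^{kg-s}$, where $s=\operatorname{tr}X$. The Jacobian is then a rank-one-plus-diagonal structure. It contains the diagonal term $4e^{2x_i}\delta_{ij}$, a term coming from $-\partial_j(s)$ acting on the bracket, a term from $e^{s}+3e^{-s}$, and the rank-one term $k\,\beta_i\,\partial_jg$ with $\partial_jg=2e^{2x_j}+e^{s}-3e^{-s}$.

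I would symmetrize this matrix and test it against $z\in\mathbb{R}^3$. I would split $z$ into its volumetric part and the remainder, and reduce positivity to a quadratic inequality in the scalars $e^{2x_i}$ and $e^{\pm s}$. The dangerous terms are the negative ones generated by the prefactor $e^{-s}$. These must be absorbed using the positive term $k(\beta\cdot z)(\nabla g\cdot z)$. Because the vectors $\beta$ and $\nabla g$ are not parallel, completing the square yields a discriminant-type condition. I expect this condition to reduce to $k\ge 1/(8\sqrt3)$ after optimizing over the stretches. I would first test it on hydrostatic and uniaxial states to locate the extremal configuration, and then prove the bound in general by a Cauchy--Schwarz estimate.

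\textbf{Bijectivity.} \TSTSM integrates along segments in the convex space $\operatorname{Sym}(3)$ to strict monotonicity of $\widehat\sigma$, which gives injectivity. For surjectivity I would apply the Minty--Browder theorem in finite dimensions: a continuous, monotone, coercive map is onto. It therefore suffices to show $\langle\widehat\sigma(X),X\rangle/\lVert X\rVert\to\infty$. This follows because $e^{kg}$ grows super-exponentially whenever $\lVert X\rVert\to\infty$. The sign of each leading term in the pairing with $X$ is positive: for $\operatorname{tr}X\to-\infty$ the $-3e^{-\operatorname{tr}X}\Id$ term pairs with $\operatorname{tr}X<0$ to give a positive contribution, and the $e^{2X}$ term dominates along its largest eigenvalue otherwise. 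Finally, since $V=e^X$, the map $V\mapsto\sigma(V)$ is a bijection from $\operatorname{Sym}^{++}(3)$ onto $\operatorname{Sym}(3)$.
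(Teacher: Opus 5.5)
Your algebraic part (polyconvexity, strict rank-one convexity, stress-free reference state, $\lambda=2\mu$, $\kappa=8\mu/3$) and your shear condition (ii) agree with the paper; you should also record the coincident-eigenvalue limit $2\mu e^{kg-s}e^{2x_i}>0$ of (ii). Your Minty--Browder surjectivity argument is a legitimate alternative to the paper's route through properness and an open-and-closed image. Coercivity follows cleanly from convexity of $g$: $\nabla g(x)\cdot x\ge g(x)\ge 0$ gives $\langle\widehat\sigma(X),X\rangle\ge\frac{\mu}{2}e^{kg-s}g$, and you also have $e^{-s}\ge(g+7)^{-1}$ and $\lVert x\rVert=O(\log(g+7))$. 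The genuine gap is condition (i), which carries the entire threshold $k_*$. You do not prove it; you only expect some discriminant condition to ``reduce to'' $k\ge1/(8\sqrt3)$. Moreover, the plan rests on a false premise. The vectors $\beta$ and $\nabla g$ are not non-parallel: by your own formulas they coincide, $\beta_i=2e^{2x_i}+e^{s}-3e^{-s}=\partial_i g$, because the principal Kirchhoff stress of $\Phi_k\circ g$ is $\Phi_k'(g)\nabla g$.

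With this observation, the principal Jacobian is $\frac{\mu}{2}e^{kg-s}\bigl(\nabla^2 g+k\,\nabla g\otimes\nabla g-\nabla g\otimes\mathbf 1\bigr)$, where $\nabla^2g=\operatorname{diag}(4e^{2x_1},4e^{2x_2},4e^{2x_3})+(3e^{-s}+e^{s})\,\mathbf 1\otimes\mathbf 1$. The missing step is a single completion of the square:
\[
k(\nabla g\cdot z)^2-(\nabla g\cdot z)(\mathbf 1\cdot z)
=k\Bigl(\nabla g\cdot z-\frac{\mathbf 1\cdot z}{2k}\Bigr)^2-\frac{(\mathbf 1\cdot z)^2}{4k}.
\]
So the stiffening term absorbs the Cauchy-conversion term only up to the residual $-\frac{1}{4k}(\mathbf 1\cdot z)^2$. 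That residual must be absorbed by the volumetric curvature $3e^{-s}+e^{s}$, which is at least $2\sqrt3$ by AM--GM; this is exactly where $k_*=1/(8\sqrt3)$ comes from. The diagonal term $4\sum_i e^{2x_i}z_i^2$ then gives strict positivity. Note also that there is no finite extremal state to locate by testing hydrostatic states. There the diagonal term contributes $\frac43e^{2t}(\mathbf 1\cdot z)^2$ and yields a smaller candidate threshold. The bound is approached only asymptotically, under unbounded distortion at $J=\sqrt3$, as in the paper's Section 4 witness.
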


\begin{theorem}\label{thm:2} If $0<k<k_*$, then $W_k$ fails \TSTSM at a finite deformation.

\end{theorem}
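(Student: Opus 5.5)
The plan is to make the failure of \TSTSM explicit by computing the Eulerian tangent of $\widehat\sigma$ in principal coordinates. Then, for each $k<k_*$, I would exhibit one stretch $V$ and one Hencky increment $Z$ that give a negative pairing in \eqref{eq:tsts-local}.

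\textbf{Principal-coordinate form of the stress.} Write $X=\log V=\operatorname{diag}(x_1,x_2,x_3)$, $J=e^{x_1+x_2+x_3}$ and $\Phi=\sum_i e^{2x_i}+3J^{-1}+J-7$. By isotropy the Kirchhoff stress is $\tau=\partial_X W_k$, so $\sigma=J^{-1}\tau$ has principal values
\[
\sigma_i=\tfrac{\mu}{2}\,e^{k\Phi}e^{-\sum_l x_l}\,\varphi_i,
\qquad
\varphi_i:=\partial_{x_i}\Phi=2e^{2x_i}+J-3J^{-1}.
\]
The tangent $\Diff_X\widehat\sigma$ splits in the usual Daleckii--Krein way into two parts:
\begin{itemize}
\item a coaxial block $\partial\sigma_i/\partial x_j$ acting on diagonal $Z$;
\item off-diagonal shear moduli equal to the divided differences $(\sigma_i-\sigma_j)/(x_i-x_j)$.
\end{itemize}
For failure it is enough to find one negative coaxial direction, so I would restrict to diagonal $Z=\operatorname{diag}(z)$. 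A direct differentiation gives
\[
\langle \Diff_X\widehat\sigma(X).Z,Z\rangle
=\tfrac{\mu}{2}e^{k\Phi}J^{-1}\Bigl[k\,(\varphi\cdot z)^2+z^{T}Hz-(\varphi\cdot z)(\mathbf 1\cdot z)\Bigr],
\]
where $H_{ij}=4e^{2x_i}\delta_{ij}+J+3J^{-1}$ is positive definite. The indefinite term $-(\varphi\cdot z)(\mathbf 1\cdot z)$ is the only source of failure, and it comes from the factor $J^{-1}$.

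\textbf{Reduction to a scalar condition.} Fix $a=\varphi\cdot z$ and $b=\mathbf 1\cdot z$, and minimize $z^THz$ under these two constraints by Lagrange multipliers. The minimum is $(a,b)\,G^{-1}(a,b)^T$, where $G$ is the Gram matrix of $\varphi$ and $\mathbf 1$ in the $H^{-1}$ inner product. The bracket is then a binary quadratic form $k a^2+(a,b)G^{-1}(a,b)^T-ab$. This form is positive definite iff
\[
k> R(x):=\frac{1}{4\,(G^{-1})_{22}}-(G^{-1})_{11}
\]
(up to the exact algebraic form), i.e.\ iff its discriminant is negative. Global \TSTSM on the coaxial block is therefore equivalent to $k\ge\sup_x R(x)$, with the equality case handled as in the proof of Theorem~\ref{thm:1}. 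Presumably the sufficiency argument of Theorem~\ref{thm:1} identifies exactly $\sup R=k_*=1/(8\sqrt3)$, and I would reuse that computation.

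\textbf{Main step and expected obstacle.} It remains to show that the supremum is attained, or at least approached, at finite $x$. Then, for $k<k_*$, continuity gives some $x$ with $R(x)>k$, and the corresponding optimal $(a,b)$ and $z$ give a strictly negative pairing at the finite stretch $V=e^X$.

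I would first try the symmetric volumetric path $x_1=x_2=x_3=s$ together with one-parameter deviations $x=(s+t,s-t/2,s-t/2)$. There $H^{-1}$ is explicit by Sherman--Morrison, and $R$ becomes an explicit function of $(s,t)$. I expect the maximizer to satisfy an algebraic equation in $e^{s}$ whose solution gives the value $1/(8\sqrt3)$. The checks needed are:
\begin{itemize}
\item $R\to$ smaller values as $|x|\to\infty$, because the exponential growth of $\Phi$ makes $G^{-1}$ small;
\item $R\to0$ near the reference state, which is stress free with positive moduli.
\end{itemize}
Together these show that the maximum is interior, hence at a finite deformation. Pinning down the maximizer in closed form is the hard calculus step. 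If closed form is elusive, it suffices to show that some explicit point $x^\dagger$ has $R(x^\dagger)$ arbitrarily close to $k_*$, or exactly equal to it.
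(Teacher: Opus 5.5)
Your coaxial pairing is correct (it is the paper's $A_k\,h^{\mathsf T}T_kh$), and restricting to diagonal $Z$ is legitimate. The gap is that the proposal never proves the one substantive claim, $\sup_x R(x)\ge k_*$. The sufficiency argument of Theorem~1 gives only the opposite inequality $\sup_x R(x)\le k_*$, so it cannot be reused to get sharpness. Your worry about the supremum being ``attained at finite $x$'' is also misplaced. Every $x\in\mathbb R^3$ is a finite deformation, so once $\sup_x R\ge k_*$ is known, the definition of the supremum already gives, for each $k<k_*$, a state with $R(x)>k$. All of the content lies in producing states where $R$ comes close to $k_*$.

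The route you sketch for that lower bound cannot work. At $k=k_*$ the completed square gives $h^{\mathsf T}T_{k_*}h\ge h^{\mathsf T}Dh>0$ at every finite $x$. Hence $R(x)<k_*$ everywhere: the value $k_*$ is attained nowhere, and no interior critical point or algebraic equation in $e^{s}$ can deliver $1/(8\sqrt3)$. Your expectation that $R$ becomes small as $|x|\to\infty$ fails exactly along the relevant direction. (Also, $e^{k\Phi}$ has been divided out of your bracket, so it cannot influence $R$ at all.)

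To see where the near-critical states lie, write
$h^{\mathsf T}T_kh=h^{\mathsf T}Dh+(\psi''(s)-\tfrac1{4k})(\mathbf1\cdot h)^2+k(g\cdot h-\tfrac{\mathbf1\cdot h}{2k})^2$ and normalize $\mathbf1\cdot h=1$. A negative value at some $k$ close to $k_*$ forces three things:
\begin{itemize}
\item $\psi''(s)$ must be near its minimum $2\sqrt3$, so $J\to\sqrt3$;
\item $4\sum_ie^{2x_i}h_i^2\to0$, so some stretch tends to $0$;
\item $g\cdot h\to4\sqrt3$ at vanishing $D$-cost, so by Cauchy--Schwarz some stretch tends to $\infty$.
\end{itemize}
Such states are unboundedly distorted, far from the hydrostatic axis where you propose to start. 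On that axis $T_k\succ0$ whenever $\tfrac1{4k}<\min_t\bigl(\tfrac43e^{2t}+3e^{-3t}+e^{3t}\bigr)\approx5.196$, i.e. already for $k>0.049$, whereas $k_*\approx0.0722$.

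The missing idea is the paper's explicit witness. On $x_L=(L,-L,\tfrac12\log3)$ one has $\psi''(s)=2\sqrt3$ and $g=2(e^{2L},e^{-2L},3)$. Take $h_L=(e^{-2L}/(4k),1,0)$. Then the $D$-term is $O(e^{-2L})$, the square is $O(e^{-4L})$, and $(\mathbf1\cdot h_L)^2\to1$. So the form tends to $2\sqrt3-\tfrac1{4k}<0$ and is negative at some finite $L$.

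Two minor points. Positivity of your binary form is equivalent to $k>\bigl((G^{-1})_{12}-\tfrac12\bigr)^2/(G^{-1})_{22}-(G^{-1})_{11}$, not your stated $R$. Also, $G$ is singular on the hydrostatic axis, where $\varphi\parallel\mathbf1$.
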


\begin{theorem}\label{thm:3} Let $\ell_{\rm B}$ be the unique positive root of

\begin{equation}
r(\ell):=4\ell^{11}+4\ell^{10}-81\ell^6-102\ell^5-81=0,
\end{equation}
and set

\begin{equation}
k_{\rm B}
=\frac{2\ell_{\rm B}^3(\ell_{\rm B}^5-9)}{3(\ell_{\rm B}^6+2\ell_{\rm B}^5-3)^2}
=0.00827233304\ldots.
\end{equation}
If $0<k<k_{\rm B}$, the map $V\mapsto\sigma(V)$ is not injective. If $k=k_{\rm B}$ it is a global homeomorphism, but the inverse is not differentiable at the critical stress. If $k>k_{\rm B}$ it is a global $C^\infty$ diffeomorphism. In particular, for every $k_{\rm B}\le k<k_*$ the map $V\mapsto\sigma(V)$ is globally bijective while \TSTSM fails.

\end{theorem}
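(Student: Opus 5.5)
The approach is to reduce everything to principal stretches and then apply a Hadamard-type global inversion argument. The sharp threshold $k_{\rm B}$ should appear as the first value of $k$ at which the Jacobian determinant of the principal stress map can vanish.

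\textbf{Principal stress map.} From $W_k$ we compute
\begin{equation*}
\sigma=\mu\,g\,\bigl(J^{-1}B+\tfrac12(1-3J^{-2})\Id\bigr),\qquad g=\exp\bigl(k(\lVert F\rVert^2+3J^{-1}+J-7)\bigr).
\end{equation*}
So $\sigma$ is coaxial with $B=V^2$, with eigenvalues
\begin{equation*}
\sigma_i=\mu g\bigl(\lambda_i^2/J+\tfrac12-\tfrac32 J^{-2}\bigr).
\end{equation*}
Since $\sigma_i-\sigma_j=\mu g(\lambda_i^2-\lambda_j^2)/J$ and $g>0$, the map preserves the ordering of eigenvalues. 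Hence bijectivity, homeomorphy and $C^\infty$-diffeomorphy of $V\mapsto\sigma(V)$ on $\operatorname{Sym}^{++}(3)$ are equivalent to the same properties of the permutation-equivariant map $\Sigma:(x_1,x_2,x_3)\mapsto(\sigma_1,\sigma_2,\sigma_3)$, where $x_i=\log\lambda_i\in\mathbb R^3$. For smoothness of the lifted inverse I would invoke the standard result that symmetric $C^\infty$ spectral functions lift to $C^\infty$ isotropic tensor functions.

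\textbf{Properness.} I would next show that $\Sigma$ is proper, i.e. $\lvert\Sigma(x)\rvert\to\infty$ as $\lvert x\rvert\to\infty$.
\begin{itemize}
\item If $J\to0$, the term $-\tfrac32 J^{-2}$ multiplied by $g\ge\exp(3k/J-\dots)$ forces some $\sigma_i\to-\infty$.
\item If $J$ stays bounded away from $0$ and some $\lambda_i\to\infty$, then $g\to\infty$ and the largest $\sigma_i\to+\infty$.
\item The remaining degenerate regimes, with some $\lambda_i\to0$ and $J$ bounded, are handled by the same two terms.
\end{itemize}

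\textbf{Jacobian determinant.} The main computation is $\det \Diff\Sigma(x)$. Writing $\Sigma=\mu g\,(\text{diagonal map}+\text{terms depending only on }J\text{ and }\lVert F\rVert^2)$, the Jacobian has the form diagonal plus a rank-two update. The matrix determinant lemma should then give a closed form $\det\Diff\Sigma=\mu^3g^3\,\Delta_k(\lambda)$, with $\Delta_k$ rational in $\lambda_i^2$ and affine in $k$.

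The key claim is that $\min\Delta_k$ is attained on the axisymmetric set $\lambda_1=\lambda_2$. I would try to prove this first by a Lagrange/symmetrization argument at fixed $J$ and $\lVert F\rVert^2$. Along $\lambda_1=\lambda_2$, solving $\Delta_k=0$ for $k$ gives $k$ as an explicit function of a single stretch variable $\ell$. Maximizing this function over $\ell$ should give the critical-point equation $r(\ell)=0$ and the stated formula for $k_{\rm B}$. This yields:
\begin{itemize}
\item $\Delta_k>0$ everywhere if $k>k_{\rm B}$;
\item $\Delta_{k_{\rm B}}\ge0$, vanishing only on the orbit of one point (the critical stress);
\item $\Delta_k<0$ somewhere if $k<k_{\rm B}$.
\end{itemize}
I expect this reduction to the symmetric slice, together with the sign analysis of the resulting one-variable function (uniqueness of the positive root of $r$), to be the main obstacle.

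\textbf{Conclusion in each regime.}
\begin{itemize}
\item For $k>k_{\rm B}$: $\Sigma$ is a proper local $C^\infty$ diffeomorphism of $\mathbb R^3$. By Hadamard's global inverse theorem it is a global diffeomorphism, and the result lifts to $V\mapsto\sigma(V)$.
\item For $k=k_{\rm B}$: I would show that $\Sigma$ is still a local homeomorphism at the degenerate orbit. One route is a one-dimensional monotonicity argument along the axisymmetric line, combined with positivity of the transverse Jacobian block. Another is a local-degree computation: the degree is $1$ by homotopy in $k$ through proper maps. A proper local homeomorphism of $\mathbb R^3$ onto the simply connected space $\mathbb R^3$ is a homeomorphism. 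Its inverse cannot be differentiable at the critical stress, since $\det\Diff\Sigma=0$ there.
\item For $k<k_{\rm B}$: $\Sigma$ is proper with Brouwer degree $1$, obtained by homotopy in $k$ through proper maps. Take a regular value near a point where $\Delta_k<0$. Its preimages have Jacobian signs summing to $1$ while one sign is $-1$, so there are at least three preimages and injectivity fails.
\end{itemize}

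\textbf{Final assertion.} The last sentence of the theorem follows by combining the above with Theorem~\ref{thm:2} and the numerical inequality $k_{\rm B}<k_*=1/(8\sqrt3)\approx0.0722$.
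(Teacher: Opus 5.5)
\textbf{Gap: the reduction to the hydrostatic state.} The missing step is the one you flag as the main obstacle, and the route you propose cannot reach the statement. A Lagrange/symmetrization argument at fixed $J$ and $\lVert F\rVert^2$ can at best force $\lambda_1=\lambda_2$. That set is two-dimensional (parametrized, say, by $J$ and $\lambda_1$), so solving $\Delta_k=0$ for $k$ on it gives a function of two variables, not of ``a single stretch variable $\ell$''. The formula for $k_{\rm B}$ and the polynomial $r$ in the statement belong to the \emph{hydrostatic} critical value
\[
\theta_{\rm h}(\ell)=\frac{2\ell^3(\ell^5-9)}{3(\ell^6+2\ell^5-3)^2},\qquad V=\ell\Id .
\]
To reach it you must also control the dependence on $\lVert F\rVert^2$ at fixed $J$, and your plan never does this. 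Consequently two things remain unproved: that $\det\Diff\Sigma>0$ everywhere for $k>k_{\rm B}$, and that at $k=k_{\rm B}$ the determinant vanishes at a single state. That state is $V=\ell_{\rm B}\Id$, which is fixed by all permutations rather than being a three-point orbit. Its degenerate direction is the hydrostatic line, not an axisymmetric one. Only the case $k<k_{\rm B}$ survives, because there it suffices to evaluate at $V=\ell_{\rm B}\Id$.

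\textbf{The missing idea is a monotonicity structure.} Set $y_i=\lambda_i^2$, $Y=\sum_iy_i$, $S=\sum_iy_i^{-1}$, $q=J-3J^{-1}$, $m=J+3J^{-1}$. Let $L_k=H+g\otimes(kg-\mathbf1)$ be the paper's normalized principal tangent, so that $\det\Diff\Sigma=A_k^3\det L_k$. The matrix determinant lemma and Sherman--Morrison inversion of $H=D+m\,\mathbf1\otimes\mathbf1$ give
\[
\det L_k=64J^2\Bigl(kN-\frac12+\frac{3S}{2J}\Bigr),
\qquad
N=Y+3q+\frac{q^2S}{4}+\frac{m(SY-9)}{4}.
\]
At fixed $J$ this is strictly increasing in both $Y$ and $S$, because $\partial_YN=1+mS/4>0$ and $\partial_SN=(q^2+mY)/4>0$. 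The AM--GM bounds $Y\ge3J^{2/3}$ and $S\ge3J^{-2/3}$ hold with simultaneous equality only when $y_1=y_2=y_3$. Hence, for each $J$, the strict minimum sits at the hydrostatic state $V=J^{1/3}\Id$; in the paper's notation, $\theta(x)\le\max\{0,\theta_{\rm h}(s/3)\}$. The threshold becomes $\sup_{\ell\neq1}\theta_{\rm h}(\ell)$, and the rest is one-variable calculus: $\mathrm d\theta_{\rm h}/\mathrm d\ell=-2\ell^2r(\ell)/\bigl(3(\ell^6+2\ell^5-3)^3\bigr)$ together with Descartes' rule.

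\textbf{The rest of your plan is then sound, with three caveats.}
First, your chain-rule argument for nondifferentiability of the inverse is valid and simpler than the paper's cube-root computation.
Second, for $k<k_{\rm B}$ your degree homotopy must end in a regime already known to be a diffeomorphism, e.g.\ $k\ge k_*$ from Theorem~1. The hydrostatic fold $\sigma_{\rm h}'(t)=3A_kG(t)^2(k-\theta_{\rm h}(t))$ gives noninjectivity more directly.
Third, at $k=k_{\rm B}$, local degree $1$ alone does not imply local injectivity; cusp-type maps are counterexamples. It does once combined with $\det\Diff\Sigma\ge0$ vanishing only at the isolated hydrostatic point.
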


The subscript B labels the bijectivity threshold. The proofs are organized around the two thresholds. Section 2 derives the energy and its Cauchy stress. Section 3 proves polyconvexity, the reference-state linearization, and global \TSTSM for $k\ge k_*$, and then combines local invertibility with properness of the stress map, established through its unbounded growth along escaping stretch sequences, to obtain global bijectivity in that regime. The isotropic spectral derivative criterion reduces \TSTSM to positive definiteness of the symmetric principal-stress tangent together with positive noncoaxial moduli \citep{Leblond1992,Wollner2026,Voss2026}. Section 4 constructs a finite-deformation witness showing that $k_*$ cannot be lowered. Section 5 instead studies the unsymmetrized stress tangent and its hydrostatic singularity, determines the bijectivity threshold $k_{\rm B}$, and proves Theorem 3; the distinction between global inversion and local tangent conditions follows Refs. \citep{Ghiba2026,Silhavy1997}. Appendix A records closed-form Cauchy-stress and incremental responses in four homogeneous deformation modes. Appendix~B proves the complete annular pressure-inversion theorem and compares $W_k$ with the Ciarlet--Geymonat neo-Hooke, exponentiated Hencky, and stress-free $I_1$-exponential energies at radial-stress inversion, reduced convexity, global shooting, pressure monotonicity, and endpoint growth. Appendix~C proves spherical-membrane pressure inversion for the Demiray family with $b\ge b_*$ and derives the limit point of incompressible neo-Hooke. Figure 1 places both sharp thresholds and their failure mechanisms in one parameter diagram: the hydrostatic folds in panel (b) produce noninjectivity below $k_{\rm B}$, while the finite-distortion direction in panel (c) proves the loss of \TSTSM below $k_*$.

\begin{figure}[ht]
\centering
\includegraphics[width=\textwidth]{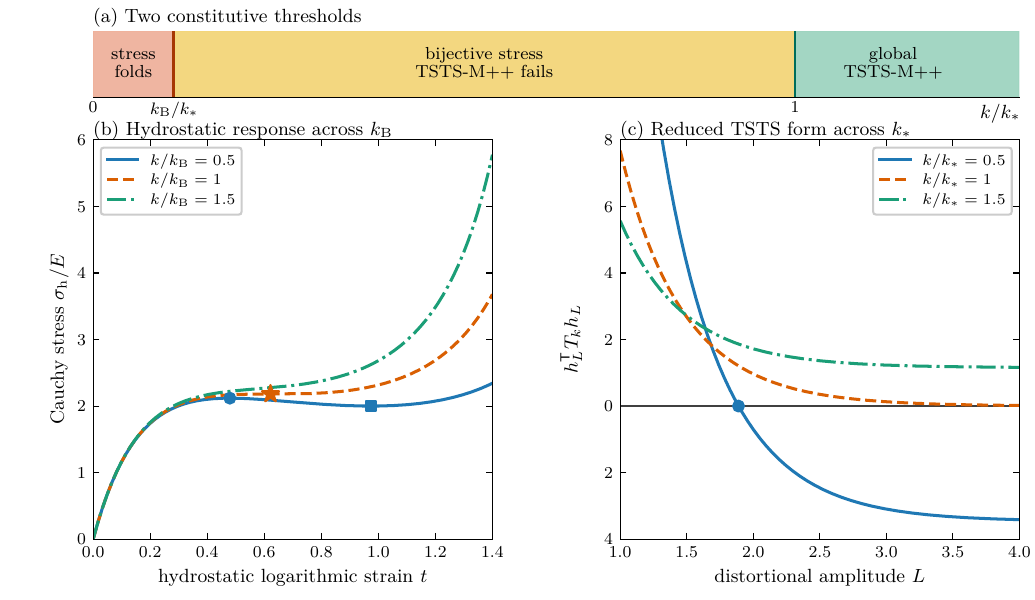}
\caption{The two sharp thresholds for stress bijectivity and \TSTSM. Panel (a) partitions the parameter axis $k/k_*$. Hydrostatic folds occur for $0<k<k_{\rm B}$; at $k=k_{\rm B}$ the stress map is a global homeomorphism with a nondifferentiable inverse; for $k_{\rm B}<k<k_*$ the stress map remains globally bijective although \TSTSM fails; and global \TSTSM holds for $k\ge k_*$. Panel (b) shows the hydrostatic Cauchy stress $\sigma_{\rm h}(t)$ across $k_{\rm B}$. Solid blue, dashed orange, and dash-dotted green correspond to $k/k_{\rm B}=0.5,1,1.5$. The circle and square on the blue curve mark its local maximum and minimum, which create an interval in which one hydrostatic stress has three volumetric-strain preimages; the orange star marks the stationary inflection point with cubic leading term at threshold. Panel (c) shows the reduced TSTS quadratic form $h_L^{\mathsf T}T_k h_L=A_k^{-1}\langle \Diff_x s_k(x_L)\cdot h_L,h_L\rangle$ along the finite-distortion path of Section 4. The three curves correspond to $k/k_*=0.5,1,1.5$: the blue curve crosses zero at finite $L$ and tends to a negative limit, the orange curve is positive for every finite $L$ and tends to zero as $L\to\infty$, and the green curve tends to a positive limit. This sign separation proves that $k_*$ is the sharp threshold for global \TSTSM.}
\label{fig:1}
\end{figure}

\normalcolor
\section{The stored-energy function and its Cauchy stress}

Let $F\in\mathrm{GL}^+(3)$ and $J=\det F$. For $\mu>0$ and $k>0$ define

\begin{equation}
P(F)=\lVert F\rVert^2+3J^{-1}+J,
\qquad
W_k(F)
=\frac{\mu}{2k}
\bigl[\exp\bigl(k(P(F)-7)\bigr)-1\bigr].
\end{equation}
Here $\mu>0$ is the infinitesimal shear modulus, $E=\mu/2$ is the stress scale used in the figures, and $k$ is a dimensionless finite-strain stiffening parameter. The term $\lVert F\rVert^2$ measures the overall stretch level, $3J^{-1}$ provides a barrier against vanishing volume, and $J$ contributes to the resistance against volumetric expansion. The coefficients are chosen so that $2+1-3=0$ makes the reference stretch stress free, while $7=3+3+1$ normalizes its energy to zero; the minimum curvature of $3e^{-s}+e^s$ will determine $k_*$.

Write $B=FF^{\mathsf T}$, $V=B^{1/2}$ and $X=\log V$, with principal values $x=(x_1,x_2,x_3)$ and $s=x_1+x_2+x_3=\log J$. Let $\mathbf1=(1,1,1)^{\mathsf T}$ and let $\Id$ denote the identity tensor. The principal representation of $P$ is

\begin{equation}
p(x)=\sum_{i=1}^3\exp(2x_i)+3\exp(-s)+\exp(s),
\qquad
g=\nabla p,
\qquad
H=\nabla^2 p.
\end{equation}
Componentwise,

\begin{equation}
g_i=2\exp(2x_i)+\exp(s)-3\exp(-s),
\end{equation}

\begin{equation}
H_{ij}=4\exp(2x_i)\delta_{ij}+(3\exp(-s)+\exp(s)).
\end{equation}
Hence

\begin{equation}
H=D+\psi''(s)\,\mathbf1\otimes\mathbf1,
\end{equation}
where

\begin{equation}
D=\operatorname{diag}(4\exp(2x_1),4\exp(2x_2),4\exp(2x_3)),
\qquad
\psi(s)=3\exp(-s)+\exp(s).
\end{equation}
Since each diagonal entry $4\exp(2x_i)$ is positive, $D$ is positive definite for every $x\in\mathbb R^3$. The Kirchhoff stress, $\tau$, is work-conjugate to the Hencky strain. Using $\sigma=J^{-1}\tau$ gives the full Cauchy stress

\begin{equation}
\sigma_k(F)
=\frac{\mu}{2}\exp\bigl(k(P(F)-7)\bigr)
\bigl[2J^{-1}B+(1-3J^{-2})\Id\bigr].
\end{equation}
Its principal stress vector is

\begin{equation}
s_k(x)=A_k(x)g(x),
\qquad
A_k(x):=\frac{\mu}{2}\exp\bigl(-s+k(p(x)-7)\bigr)>0.
\end{equation}
Here $g$ gives the principal response of the inner energy, $\exp(-s)$ converts Kirchhoff stress to Cauchy stress, and $\exp(k(p-7))$ introduces the finite-strain stiffening controlled by $k$. The two thresholds below are set jointly by the strain curvature of the inner energy, the volumetric Cauchy-stress conversion, and this exponential stiffening. The Cauchy stress in four homogeneous deformation modes is given in Appendix A.

\section{Proof of Theorem 1: polyconvexity, linearization, and \TSTSM}

\subsection{Polyconvexity and rank-one convexity}

In the definition of polyconvexity, treat $F$, the variable $G$ representing $\Cof F$, and the variable $J$ representing $\det F$ as independent minors, and define

\begin{equation}
\mathcal P(F,G,J)
=
\begin{cases}
\lVert F\rVert^2+3J^{-1}+J, &\quad J>0,\\
+\infty, &\quad J\le 0.
\end{cases}.
\end{equation}
The first summand is convex in $F$ and independent of $(G,J)$. The map $J\mapsto 3J^{-1}+J$ is strictly convex on $(0,\infty)$, and the value $+\infty$ on $(-\infty,0]$ is its closed convex extension. Thus $\mathcal P$ is convex in the independent minors in the sense of Refs. \citep{Ball1976,Ciarlet1988}. Substitution of $G=\Cof F$ and $J=\det F$ recovers $P(F)$. This convex representation uses only $F$ and $J$; no explicit cofactor term is required.

The outer function $t\mapsto k^{-1}(\exp(k(t-7))-1)$ is convex and increasing, so $\mu/2$ times its composition with $\mathcal P$ remains a convex function of the minors. This is polyconvexity of $W_k$ for every $k>0$. Along an admissible rank-one line $F+ta\otimes b$ with $a\otimes b\neq0$, the determinant $J(t)$ is affine and

\begin{equation}
\frac{\mathrm d^2}{\mathrm dt^2}P(F+ta\otimes b)
=2\lVert a\otimes b\rVert^2+\frac{6(J'(t))^2}{J(t)^3}>0.
\end{equation}
The outer derivative is strictly positive on the physical range, so the composition remains strictly convex in $t$. Hence $W_k$ is strictly rank-one convex along every nonzero rank-one deformation mode.
Because $P$ depends only on the singular values of $F$ and on $J$, the energy is objective and isotropic \citep{TruesdellNoll1965}. Every term is smooth for $J>0$, so $W_k$ is smooth ($C^\infty$) on $\mathrm{GL}^+(3)$.

To verify directly that this explicit construction also satisfies the invariant criterion used in the open problem stated in the Introduction, we now perform an independent check in the framework of Ref. \citep{Wollner2026}. Let

\begin{equation}
K_1=\lVert F\rVert,
\qquad
K_2=\lVert\Cof F\rVert,
\qquad
K_3=J=\det F,
\end{equation}
and write $W_k(F)=\Psi_k(K_1,K_3)$, where

\begin{equation}
\Psi_k(K_1,K_3)
=\frac{\mu}{2k}
\left[
\exp\!\left(k\left(K_1^2+\frac3{K_3}+K_3-7\right)\right)-1
\right].
\end{equation}
The inner function $K_1^2+3K_3^{-1}+K_3$ has Hessian $\operatorname{diag}(2,6K_3^{-3})\succ0$. Composition with the convex increasing exponential shows that $\Psi_k$ is convex in $(K_1,K_3)$ and increasing in $K_1>0$. Regarded as a function of $(K_1,K_2,K_3)$, it is independent of, and hence nondecreasing in, $K_2$. Thus $W_k$ satisfies the sufficient polyconvexity conditions of Theorem 3.1 in Ref. \citep{Wollner2026} for every $k>0$.

\subsection{Reference linearization}

At $x=0$ one has $p(0)=7$ and $g(0)=0$. The Hessian $H$ is positive definite for every $x\in\mathbb R^3$, so $x=0$ is the unique minimizer of $p$, or equivalently $V=\Id$ is the unique minimizing stretch. By objectivity, the minimizing deformation gradients are all rotations $F\in\mathrm{SO}(3)$. Thus $W_{k}(\Id)=0$ and the reference Cauchy stress vanishes for every $k>0$.

The derivative of $s_k$ at the origin receives no contribution from the second derivative of the outer function, because that term is multiplied by $g(0)=0$. Thus the exponential outer function controlled by $k$ does not alter the reference tangent and affects the constitutive response only at finite strain. One obtains

\begin{equation}
\Diff_x s_k\big|_{x=0}
=\frac{\mu}{2}H(0)
=2\mu\,\Id+2\mu\,\mathbf1\otimes\mathbf1.
\end{equation}
Comparison with the isotropic Hooke law $2\mu\Id+\lambda\,\mathbf1\otimes\mathbf1$ \citep{TruesdellNoll1965,Ogden1970} yields

\begin{equation}
\lambda=2\mu,
\qquad
\kappa=\lambda+\frac{2\mu}{3}=\frac{8\mu}{3},
\qquad
\nu=\frac13.
\end{equation}
These moduli are independent of $k$ and strictly positive. Thus all members of the family have the same stable infinitesimal response; the distinct global thresholds in Theorems 1-3 are genuinely finite-strain phenomena.

\subsection{True-stress-true-strain monotonicity}

\TSTSM asks whether the incremental Cauchy stress has a positive pairing with every nonzero logarithmic-strain increment. In what follows, $h\in\mathbb R^3$ denotes the increment of the three principal Hencky strains. Differentiating the principal stress vector gives the generally unsymmetric principal tangent

\begin{equation}
\Diff_x s_k
=A_k
\bigl(H+k\,g\otimes g-g\otimes\mathbf1\bigr).
\end{equation}
Here $H$ is the Hessian of the inner energy introduced in Section 2 and used in Section 3.2; $k g\otimes g$ is the nonnegative stiffening contribution from the exponential outer function, and $-g\otimes\mathbf1$ results from differentiating $J^{-1}=\exp(-s)$ when converting Kirchhoff stress to Cauchy stress.

After division by the positive prefactor $A_k$, the sign-determining TSTS quadratic form depends only on

\begin{equation}
T_k
=H+k\,g\otimes g
-\frac12(g\otimes\mathbf1+\mathbf1\otimes g).
\end{equation}
Its quadratic form admits the completion

\begin{equation}
h^{\mathsf T}T_k h
=h^{\mathsf T}\Bigl(H-\frac1{4k}\mathbf1\otimes\mathbf1\Bigr)h
+k\Bigl(g\cdot h-\frac{\mathbf1\cdot h}{2k}\Bigr)^2.
\end{equation}
The last term is always nonnegative. This completion isolates the possible loss of \TSTSM in the volumetric coefficient $\psi''(s)-1/(4k)$.

Substitution of the Hessian decomposition gives

\begin{equation}
H-\frac1{4k}\mathbf1\otimes\mathbf1
=D+\Bigl(\psi''(s)-\frac1{4k}\Bigr)\mathbf1\otimes\mathbf1.
\end{equation}
The elementary inequality

\begin{equation}
\psi''(s)=3\exp(-s)+\exp(s)\ge 2\sqrt3,
\end{equation}
holds for every $s\in\mathbb R$, with equality only at $s=\tfrac12\log 3$. Therefore, if $k\ge 1/(8\sqrt3)$, the coefficient of $\mathbf1\otimes\mathbf1$ is nonnegative. The remaining term $h^{\mathsf T}Dh=4\sum_i\exp(2x_i)h_i^2$ is strictly positive for $h\neq0$. Hence, for every $k\ge k_*$ and every finite $x\in\mathbb R^3$, the incremental monotonicity pairing is strictly positive. At $k=k_*$ this positivity is pointwise rather than uniform; Section 4 exhibits a sequence for which the reduced quadratic form tends to zero only at infinite distortion. The noncoaxial moduli of the full tensor derivative,

\begin{equation}
\mu\exp(-s+k(p-7))\frac{e^{2x_i}-e^{2x_j}}{x_i-x_j}>0,
\end{equation}
remain positive for every $k>0$, with repeated-stretch limits $2\mu\exp(-s+k(p-7))e^{2x_i}$. Theorem 3.25 of Ref. \citep{Voss2026} therefore reduces \TSTSM on $\operatorname{Sym}(3)$ to positive definiteness of $T_k$.

As an independent check of the principal-tangent calculation, the same threshold is recovered from the invariant sufficient condition of Ref. \citep{Wollner2026}. Because $\Psi_k$ is independent of the cofactor invariant $K_2$, Corollary 4.4.1 of that reference reduces the condition to positive-definiteness of a $2\times2$ matrix. Write $C_k$ for the positive exponential prefactor of $\Psi_k$:

\begin{equation}
C_k
=\frac{\mu}{2}\exp\!\left(k\left(K_1^2+\frac3{K_3}+K_3-7\right)\right)>0,
\end{equation}
one has $\partial_{K_1}\Psi_k=2C_k K_1>0$, and direct differentiation gives

\begin{equation}
M_k
=C_k
\begin{pmatrix}
4K_1^2(1+k K_1^2)
&K_1^2\left[2k\left(K_3-\dfrac3{K_3}\right)-1\right]
\\
K_1^2\left[2k\left(K_3-\dfrac3{K_3}\right)-1\right]
&\dfrac6{K_3}+k\left(K_3-\dfrac3{K_3}\right)^2
\end{pmatrix},
\end{equation}
and

\begin{equation}
\frac{\det M_k}{C_k^2}
=K_1^2\left[
\frac{24}{K_3}
+4k\left(K_3-\frac3{K_3}\right)^2
+K_1^2\left(4k\left(K_3+\frac3{K_3}\right)-1\right)
\right].
\end{equation}
Since $K_3+3K_3^{-1}\ge2\sqrt3$, the leading principal minor and the determinant are positive at every state if $k\ge1/(8\sqrt3)$. Conversely, if $0<k<1/(8\sqrt3)$, fix the volume ratio $K_3=\sqrt3$ and increase the distortional deformation along $F=\operatorname{diag}(e^L,e^{-L},\sqrt3)$. Then $K_1\to\infty$, and the coefficient of $K_1^4$ is negative. The invariant condition therefore holds globally precisely for $k\ge k_*$. Section 4 supplies the matching finite-deformation loss of \TSTSM, so this bound is the exact constitutive threshold for the present family.

\subsection{Properness and global inversion in the \TSTSM regime}

Let $\mathcal S_k(X):=\widehat\sigma_k(X)=\sigma_k(e^X)$ denote the full tensor stress map on $\operatorname{Sym}(3)$. In the regime $k\ge k_*$, \TSTSM first gives injectivity: for $X\neq Y$,

\begin{equation}
\langle\mathcal S_k(X)-\mathcal S_k(Y),X-Y\rangle
=\int_0^1
\langle \Diff\mathcal S_k(Y+t(X-Y))\cdot(X-Y),X-Y\rangle\,\mathrm dt
>0.
\end{equation}
Thus two distinct logarithmic strains cannot produce the same Cauchy stress, including at repeated eigenvalues.

Surjectivity requires one further fact: an unbounded strain cannot approach a finite stress. The Euclidean norm of the principal stress vector equals the Frobenius norm of the full stress tensor. Convexity and $g(0)=0$ give $g(x)\cdot x\ge p(x)-7$, and hence

\begin{equation}
\lVert s_k(x)\rVert
\ge
\frac{\mu}{2}\exp(-s)\exp(k(p(x)-7))\,\frac{p(x)-7}{\lVert x\rVert}.
\end{equation}
If $\lVert x\rVert_\infty\to\infty$, then either some $\exp(2x_i)$ diverges or $s\to-\infty$ and $3\exp(-s)$ diverges, so $p(x)\to\infty$. Moreover,

\begin{equation}
\exp(-s)\ge p^{-1},
\qquad
-2\log p+O(1)\le x_i\le\tfrac12\log p,
\end{equation}
where the lower bound follows from $x_i=s-x_j-x_\ell$. Hence $\lVert x\rVert=O(\log p)$ and

\begin{equation}
\lVert s_k(x)\rVert
\ge
\frac{\mu}{2}\exp(k(p-7))\frac{p-7}{p\lVert x\rVert}
\longrightarrow\infty.
\end{equation}
The full stress map is therefore proper for every $k>0$. In the \TSTSM regime its derivative is invertible, so its image is open. A continuous proper map between finite-dimensional Euclidean spaces is closed, so the image is also closed. Since $\operatorname{Sym}(3)$ is connected, the image is all of $\operatorname{Sym}(3)$. Injectivity and surjectivity complete the proof of Theorem 1.

\section{Proof of Theorem 2: sharp threshold for \TSTSM}

The preceding argument proves \TSTSM for $k\ge k_*$. To show that the bound cannot be lowered, it is enough to find a finite-strain path and an incremental direction along which the monotonicity pairing becomes negative. Consider the fixed-volume path

\begin{equation}
x_L=\bigl(L,-L,\tfrac12\log 3\bigr),
\qquad L\in\mathbb R.
\end{equation}
The three principal stretches along this path are $(e^L,e^{-L},\sqrt3)$, and the volume ratio remains $J=\sqrt3$. As $|L|$ increases, one direction extends while another contracts, so the distortional deformation grows at fixed volume ratio. Along the path one has $s=\tfrac12\log 3$, so the Arithmetic-Geometric Mean (AM-GM) bound on $\psi''$ is saturated and $\exp(s)-3\exp(-s)=0$. Therefore $\psi''(s)=2\sqrt3$ and $g=2(\exp(2L),\exp(-2L),3)$. If $0<k<k_*$, the defect

\begin{equation}
\delta:=\frac1{4k}-2\sqrt3,
\end{equation}
is strictly positive. The following test increment is chosen so that the nonnegative square in the completed form decays rapidly, exposing the negative volumetric contribution:

\begin{equation}
h_L=\Bigl(\frac{\exp(-2L)}{4k},\,1,\,0\Bigr),
\end{equation}
then yields

\begin{equation}
h_L^{\mathsf T}T_k h_L
=h_L^{\mathsf T}D h_L
-\delta\,(\mathbf1\cdot h_L)^2
+k\Bigl(g\cdot h_L-\frac{\mathbf1\cdot h_L}{2k}\Bigr)^2.
\end{equation}
As $L\to\infty$ the first term is $O(\exp(-2L))$ and the last term is $O(\exp(-4L))$, while $(\mathbf1\cdot h_L)^2\to1$. The reduced quadratic form therefore tends to $-\delta<0$. The prefactor $A_k(x_L)$ is strictly positive, so the reduced form and the full monotonicity pairing have the same sign. For every $k<k_*$, all sufficiently large but finite distortions on this path give a negative pairing. Thus $k_*$ is both sufficient and necessary for global \TSTSM, proving Theorem 2; Figure 1(c) displays the sign separation across the threshold.

\section{Proof of Theorem 3: sharp threshold for stress bijectivity}

Although \TSTSM and stress bijectivity involve the same principal tangent, they test different constitutive properties. \TSTSM requires its symmetric part $T_k$ to be positive definite. A nonsingular unsymmetrized tangent gives local $C^1$ invertibility, whereas global bijectivity additionally requires one-to-one attainment of every stress and can survive an isolated tangent singularity. Loss of \TSTSM below $k_*$ therefore does not by itself imply multiple stretches; the bijectivity threshold must be determined separately.

\subsection{Statewise tangent-singularity indicator}

Write the unsymmetrized factor of the principal tangent as

\begin{equation}
L_k=H+g\otimes(k g-\mathbf1),
\end{equation}
so that $\Diff_x s_k=A_k L_k$. If $g=0$, then $L_k=H\succ0$; this occurs only at the reference stretch. For $g\neq0$ set

\begin{equation}
a=g^{\mathsf T}H^{-1}g,
\qquad
b=g^{\mathsf T}H^{-1}\mathbf1,
\qquad
\theta(x)=\frac{b-1}{a}.
\end{equation}
The rank-one determinant formula (matrix determinant lemma) yields

\begin{equation}
\frac{\det L_k}{\det H}=a\bigl(k-\theta(x)\bigr).
\end{equation}
The scalar $\theta(x)$ is the value of $k$ at which the principal tangent becomes singular at the state $x$. Its largest value over all strain states is therefore the threshold above which the principal tangent is everywhere regular. Global bijectivity is established separately in Section 5.3.

To determine which deformation first develops a fold, set $y_i=\exp(2x_i)$, $z=\exp(s)$, $Y=\sum_i y_i$ and $S=\sum_i y_i^{-1}$, so that $\prod_i y_i=z^2$. With $q=z-3z^{-1}$ and $m=z+3z^{-1}$ one has $g=2y+q\mathbf1$ and $H=D+m\mathbf1\otimes\mathbf1$. Explicit inversion of this rank-one update by the Sherman-Morrison formula gives

\begin{equation}
\theta(x)=\frac{1-3S/z}{2N(Y,S;z)},
\qquad
N=Y+3q+\frac{q^2S}{4}+\frac{m(SY-9)}{4},
\qquad
a=\frac{4N}{4+mS}.
\end{equation}
Since $H\succ0$ one has $a>0$ whenever $g\neq0$; because $4+mS>0$, the last identity gives $N>0$ off the reference stretch. A nonpositive numerator therefore yields $\theta\le0$. Suppose now that the numerator is positive. Direct differentiation gives

\begin{equation}
\partial_YN=1+\frac{mS}{4}>0,
\qquad
\partial_SN=\frac{q^2+mY}{4}>0.
\end{equation}
At fixed $z$, the inequalities $Y\ge3z^{2/3}$ and $S\ge3z^{-2/3}$ hold by AM-GM, with simultaneous equality only when $y_1=y_2=y_3$. The hydrostatic state then maximizes the positive numerator and minimizes the positive denominator, hence

\begin{equation}
\theta(x)\le\theta_{\rm h}(s/3),
\qquad
\theta_{\rm h}(t)=\frac{2\ell^3(\ell^5-9)}{3(\ell^6+2\ell^5-3)^2},
\qquad
\ell=e^t.
\end{equation}
This comparison applies in the positive-numerator region. If $s=0$ and $x\neq0$, then $z=1$ and $S\ge3$, so the numerator is strictly negative and $\theta\le0$. For $s\neq0$, combining the two numerator cases gives

\begin{equation}
\theta(x)\le\max\{0,\theta_{\rm h}(s/3)\}
\qquad(s\neq0).
\end{equation}
Because the desired global maximum is positive, every maximizing state must lie in the positive-numerator region and be hydrostatic. The global critical value can therefore be found from the one-dimensional positive maximum of $\theta_{\rm h}$.

At the reference stretch $g=0$, so $\theta$ is not defined and the tangent is already positive definite. For $\ell\neq1$, one has $\theta_{\rm h}\le0$ when $\ell\le9^{1/5}$ and $\theta_{\rm h}\to0$ as $\ell\to\infty$. Its derivative factors through the polynomial $r$ displayed in Theorem 3:

\begin{equation}
\frac{\mathrm d\theta_{\rm h}}{\mathrm d\ell}
=-\frac{2\ell^2r(\ell)}{3(\ell^6+2\ell^5-3)^3},
\qquad
r'(\ell)=2\ell^4(22\ell^6+20\ell^5-243\ell-255)>0
\qquad\text{for }\ell\ge9/5.
\end{equation}
The ordered coefficients of $r$ have one sign change, so Descartes' rule gives one positive root. Since $r(9/5)<0<r(2)$, this root is simple and gives the unique global maximum, with $\theta_{\rm h}''(t_{\rm B})<0$. Consequently,

\begin{equation}
\sup_{x\in\mathbb R^3\setminus\{0\}}\theta(x)=k_{\rm B},
\qquad
\ell_{\rm B}=1.86230024\ldots,
\qquad
t_{\rm B}=\log\ell_{\rm B}=0.62181241\ldots,
\end{equation}
and the critical state is the hydrostatic stretch $V=\ell_{\rm B}\Id$. The first tangent singularity, equivalently the first loss of local $C^1$ invertibility, therefore occurs under hydrostatic dilation rather than distortional deformation.

\subsection{Hydrostatic response and the three constitutive regimes}

The fold has an immediate mechanical manifestation under hydrostatic deformation. Along $x=(t,t,t)$ one has $g=G(t)\mathbf1$ with $G(t)=2e^{2t}-3e^{-3t}+e^{3t}$. Denote the common value of the three principal Cauchy stresses by $\sigma_{\rm h}(t)$. By continuous extension at $t=0$, its tangent satisfies

\begin{equation}
\frac{\mathrm d\sigma_{\rm h}}{\mathrm dt}
=3A_k G(t)^2\bigl(k-\theta_{\rm h}(t)\bigr).
\end{equation}
The factor $G$ vanishes only at the identity, while $\sigma_{\rm h}\to-\infty$ as $t\to-\infty$ and $\sigma_{\rm h}\to+\infty$ as $t\to+\infty$. For $0<k<k_{\rm B}$, the equation $\theta_{\rm h}(t)=k$ has two roots. The hydrostatic stress consequently has two turning points, and a finite hydrostatic-stress interval contains three distinct volumetric strain states. These turning points are the folds shown in Figure 1(b). Below $k_{\rm B}$, the hydrostatic response is already not injective, and therefore neither is $V\mapsto\sigma(V)$. At $k=k_{\rm B}$ the hydrostatic curve remains strictly increasing, but its first two derivatives vanish at $t=t_{\rm B}$. For $k>k_{\rm B}$ the hydrostatic tangent stays positive. These one-dimensional facts fix the three constitutive regimes of Theorem 3; the full-tensor statements require a separate inversion argument.

\subsection{\texorpdfstring{Global inversion of the Cauchy-stress map}{Global inversion of the Cauchy-stress map}}

We now distinguish set-theoretic bijectivity from smooth invertibility. The Cauchy-stress relation is bijective for $k\ge k_{\rm B}$; a differentiable inverse exists only for $k>k_{\rm B}$.

For $k>k_{\rm B}$, the bound $\theta(x)<k$ makes the principal tangent nonsingular for every $x\in\mathbb R^3$. Together with the noncoaxial moduli of Section 3.3, the full derivative $\Diff_X\mathcal S_k$ is invertible at every state. The stress response is smooth, and the logarithm identifies $\operatorname{Sym}^{++}(3)$ with $\operatorname{Sym}(3)$, so $\mathcal S_k$ is a local $C^\infty$ diffeomorphism from $\operatorname{Sym}(3)$ to itself. Section 3.4 gives properness. Its image is open and closed and hence equals $\operatorname{Sym}(3)$; a proper local diffeomorphism is then a covering of the target. Because the target is simply connected and the domain is connected, the covering has one sheet. The stress response is therefore a global $C^\infty$ diffeomorphism.

At $k=k_{\rm B}$, the equality $\theta(x)=k_{\rm B}$ holds only at $X_{\rm B}=t_{\rm B}\Id$. Write $A_{\rm B}:=A_{k_{\rm B}}(t_{\rm B}\mathbf1)$. Away from $X_{\rm B}$ the full derivative remains invertible. At the critical state only the hydrostatic mode degenerates; the derivative on the five-dimensional deviatoric space $\operatorname{Dev}(3)$ is $4A_{\rm B}\ell_{\rm B}^2I_{\operatorname{Dev}}$ and is strictly positive.

Decompose the input as $X=tI+Y$ with $Y\in\operatorname{Dev}(3)$, and decompose the output into its hydrostatic coordinate and deviatoric stress $Z$. By continuity, after shrinking to a product neighborhood $I\times B$, the deviatoric Jacobian satisfies $\det \Diff_Y Z>0$ throughout. The implicit-function theorem then makes $(t,Z)$ valid local coordinates and reduces the full map to

\begin{equation}
(t,Z)\longmapsto(f(t,Z),Z).
\end{equation}
The determinant identity above, together with the positive noncoaxial factors, gives

\begin{equation}
\det \Diff\mathcal S_{k_{\rm B}}(X)\ge0,
\end{equation}
with equality only at $X_{\rm B}$. Choosing the local coordinates with consistent orientation, the Schur-complement formula yields

\begin{equation}
\partial_tf(t,Z)
=c\,\frac{\det \Diff\mathcal S_{k_{\rm B}}}{\det \Diff_Y Z}\ge0,
\qquad c>0,
\end{equation}
and the inequality is strict except at $(t,Z)=(t_{\rm B},0)$. For $t_1<t_2$ in $I$, integration of $\partial_tf$ over $[t_1,t_2]$ is therefore strictly positive, i.e.\ the integrand can vanish at no more than one point. Hence $f(\cdot,Z)$ is strictly increasing for every fixed $Z$, so the full map is locally injective. Invariance of domain now applies and gives a local homeomorphism at the critical state.

Along the hydrostatic line,

\begin{equation}
\sigma_{\rm h}'''(t_{\rm B})
=-3A_{\rm B}G(t_{\rm B})^2\theta_{\rm h}''(t_{\rm B})>0.
\end{equation}
Thus the inverse of the hydrostatic restriction has cube-root behavior. Since this one-dimensional inverse is the restriction of the full inverse to the hydrostatic stress line, the full inverse cannot be differentiable at the critical stress. The stress response is a local homeomorphism at every state, so its image is open; properness makes the image closed. Connectedness of $\operatorname{Sym}(3)$ therefore gives surjectivity. A proper local homeomorphism between locally compact Hausdorff spaces is a covering; here both spaces are finite-dimensional Euclidean spaces. Connectedness of the domain together with simple connectedness of the target then give one sheet. The map is therefore a global homeomorphism at $k=k_{\rm B}$. This completes the proof of Theorem 3.

\section{Conclusions}

This paper answers affirmatively the question announced at the 2024 IUTAM symposium \citep{Neff2024} and formalized by Wollner, Holzapfel and Neff \citep{Wollner2026}. For every $\mu>0$ and $k>0$, $W_k$ is defined on all of $\mathrm{GL}^+(3)$, polyconvex and strictly rank-one convex, with a stress-free reference stretch and positive infinitesimal shear and bulk moduli. For $k\ge k_*$, its Cauchy-stress response is globally \TSTSM and maps positive-definite stretches bijectively onto symmetric Cauchy stresses. The construction also meets the invariant conditions of Ref. \citep{Wollner2026} without an explicit cofactor term.

As an analytic constitutive benchmark, the family also separates the sharp thresholds for stress bijectivity and \TSTSM. The lower threshold $k_{\rm B}$ governs global inversion: below it the hydrostatic response folds; at it the stress map is a global homeomorphism with a nondifferentiable inverse; above it the map is a global $C^\infty$ diffeomorphism. Since $k_{\rm B}<k_*$, the intervening interval exhibits global stress bijectivity without \TSTSM, showing that bijectivity alone does not ensure the incremental stability encoded by \TSTSM. Appendix A records the closed stress and incremental responses of four homogeneous deformation modes and gives the stress formulas under the isochoric constraint $J=1$. Appendix~B proves that, for $k\ge k_*$, every prescribed inner radius has a unique smooth radial annular equilibrium and the pressure required to maintain the cavity increases strictly from zero to infinity. In the four-energy comparison, NH-CG has a finite pressure ceiling, $W_{\mathrm{eH}}$ has a computed nonmonotone annular branch, and $W_{\exp}$ has a globally strictly convex reduced density and an exponential large-cavity energy bound. For $W_k$, Theorem~B.1 combines global shooting, a unique smooth branch, strict pressure monotonicity, and unbounded growth into global pressure inversion. Under incompressible spherical inflation, global pressure inversion holds for the Demiray family with $b\ge b_*$, including $W_k|_{J=1}$ for $k\ge k_*$ and $W_{\exp}|_{J=1}$, and for the exponentiated Hencky membrane when $k_{\mathrm{eH}}>3/8$; incompressible neo-Hooke instead develops a limit point at $\lambda=7^{1/6}$.

\appendix
\setcounter{figure}{0}
\renewcommand{\thefigure}{A\arabic{figure}}
\renewcommand{\theHfigure}{A\arabic{figure}}
\section{Stress response in four homogeneous deformation modes}

Theorems 1-3 concern the full Cauchy-stress map on $\mathrm{GL}^+(3)$. The four subsections below record the closed Cauchy stress in four homogeneous deformation modes. Hydrostatic Cauchy stress is already shown in Figure 1(b). Throughout the figures we take $E=\mu/2=1$ and the three values $k=k_*$, $2k_*$ and $4k_*$.

The starting point is the principal formula of Section 2,

\begin{equation}
\sigma_i=A_k g_i,
\qquad
A_k=\frac{\mu}{2}\exp\bigl(-s+k(p-7)\bigr)>0,
\qquad
g_i=2\lambda_i^2+J-3J^{-1},
\end{equation}
or, equivalently, the tensor representation

\begin{equation}
\sigma_k(F)
=\frac{\mu}{2}\exp\bigl(k(P-7)\bigr)
\bigl[2J^{-1}B+(1-3J^{-2})\Id\bigr].
\end{equation}
By Theorem 1, $W_k$ is polyconvex and strictly rank-one convex on the whole of $\mathrm{GL}^+(3)$ for every $k>0$. For a $C^1$ principal-logarithmic-strain path $x=\xi(\alpha)$ with fixed principal axes, the full principal-space second-order work is

\begin{equation}
A_k\,\xi'^{\mathsf T}T_k\xi'.
\end{equation}
If $k\ge k_*$ and $\xi'\neq0$, \TSTSM makes this quantity strictly positive. Sections A.1, A.2 and A.4 use this pairing. The principal axes rotate in simple shear, so Section A.3 uses the one-dimensional shear-stress tangent. Figure A1 gives the Cauchy-stress responses in the four modes, and Figure A2 gives the incremental quantities that control their strict monotonicity with respect to the loading parameter.

\begin{figure}[ht]
\centering
\includegraphics[width=\textwidth]{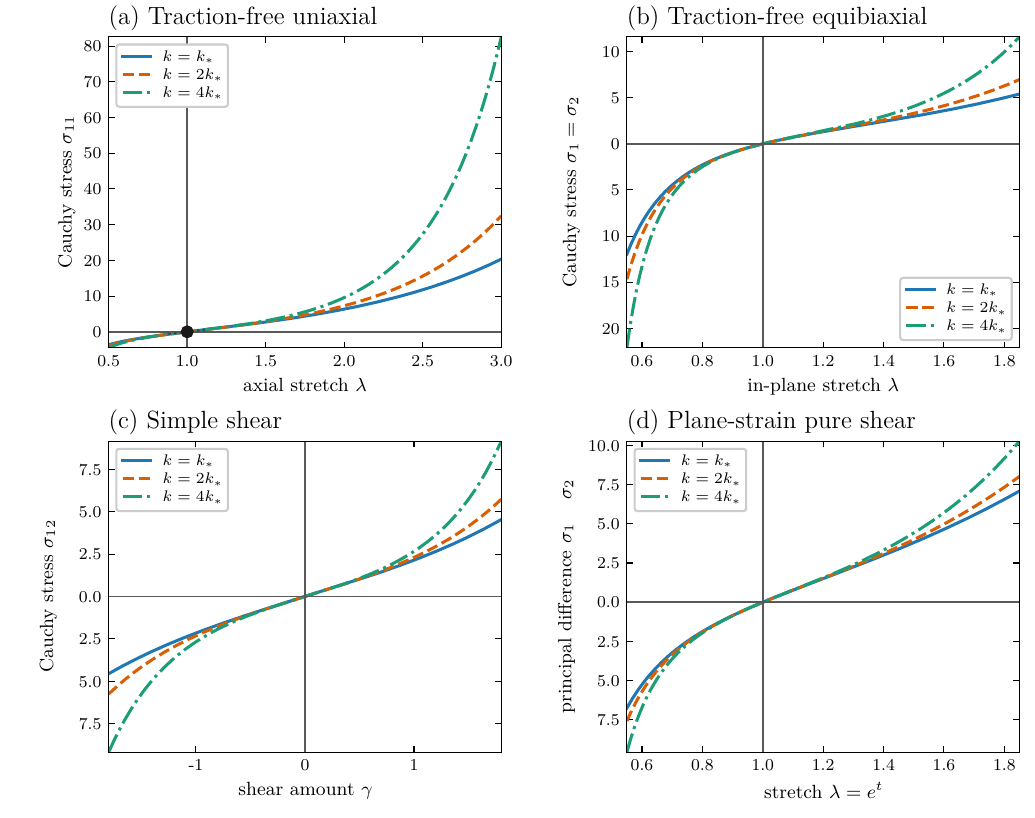}
\caption{Cauchy-stress responses of $W_k$ in four homogeneous deformation modes, with $E=\mu/2=1$. Solid blue, dashed orange, and dash-dotted green correspond to $k=k_*$, $2k_*$, and $4k_*$. Panel (a) shows $\sigma_{11}$ in uniaxial deformation with traction-free lateral faces; the black marker is the stress-free identity and the short dotted segment is the reference-state linearization $\frac{16}{3}(\lambda-1)$. Panel (b) shows $\sigma_1=\sigma_2$ in equibiaxial deformation with a traction-free thickness direction. Panel (c) shows $\sigma_{12}$ in simple shear. Panel (d) shows the principal-stress difference $\sigma_1-\sigma_2$ in plane-strain pure shear, with $\lambda=e^t$ on the abscissa. All four responses pass through the stress-free identity and increase strictly over their full parameter domains, as established by the closed relations in Sections A.1-A.4. The hydrostatic response appears in Figure 1(b), and the simple-shear Poynting stress $\sigma_{11}$ is given in Section A.3.}
\label{fig:A1}
\end{figure}

\begin{figure}[ht]
\centering
\includegraphics[width=\textwidth]{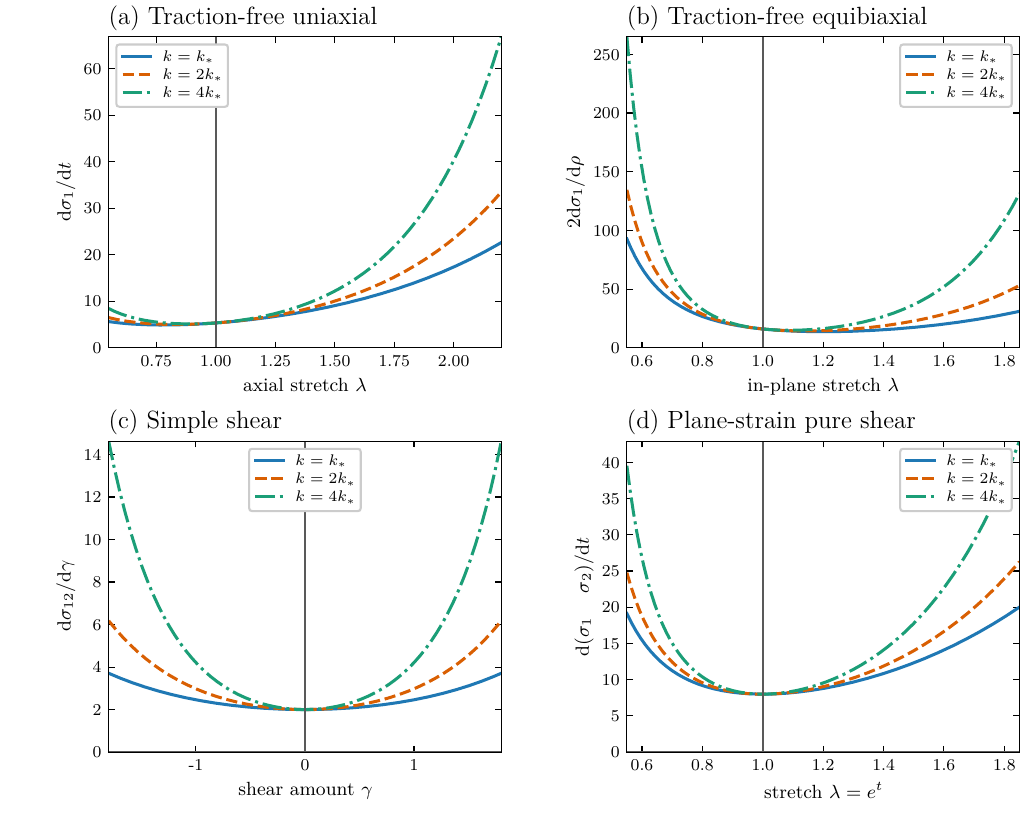}
\caption{Incremental responses along the four loading paths of Figure A1, with $E=\mu/2=1$. Solid blue, dashed orange, and dash-dotted green correspond to $k=k_*$, $2k_*$, and $4k_*$. The ordinates are the increments identified in Sections A.1-A.4. Panel (a) shows $\mathrm d\sigma_1/\mathrm dt=A_k\xi'^{\mathsf T}T_k\xi'$. Panel (b) shows $2\mathrm d\sigma_1/\mathrm d\rho$. Panel (c) shows $\mathrm d\sigma_{12}/\mathrm d\gamma=\mu\exp(k\gamma^2)(1+2k\gamma^2)$. Panel (d) shows $\mathrm d(\sigma_1-\sigma_2)/\mathrm dt$. Positivity in panels (a) and (b) follows from \TSTSM for $k\ge k_*$; the closed expressions in panels (c) and (d) are strictly positive for every $k>0$. These four positive incremental Cauchy moduli \citep{Neff2025CSP} directly yield the strict monotonicity shown in Figure A1.}
\label{fig:A2}
\end{figure}

\subsection{Compressible uniaxial deformation}

Consider traction-free uniaxial deformation

\begin{equation}
F=\operatorname{diag}(\lambda,r,r),\qquad \sigma_2=\sigma_3=0,\qquad \lambda>0.
\end{equation}
The volume ratio is $J=\lambda r^2$. Because $A_k>0$, the lateral condition $\sigma_2=\sigma_3=0$ is equivalent to $g_2=g_3=0$, that is

\begin{equation}
2r^2+J-3J^{-1}=0.
\end{equation}
Set $y=r^2>0$, so that $J=\lambda y$. Substitution gives $2y+\lambda y-3/(\lambda y)=0$, hence $y^2(\lambda+2)=3/\lambda$. The positive root is

\begin{equation}
y(\lambda)=\sqrt{\frac{3}{\lambda(\lambda+2)}},\qquad
J(\lambda)=\sqrt{\frac{3\lambda}{\lambda+2}}.
\end{equation}
The same identity rewrites $3J^{-1}=2y+J$, so that along this deformation $P$ reduces to

\begin{equation}
P=\lambda^2+2y+3J^{-1}+J=\lambda^2+4y+2J.
\end{equation}
The axial factor is $g_1=2\lambda^2+J-3J^{-1}=2(\lambda^2-y)$, and the axial Cauchy stress has the closed form

\begin{equation}
\sigma_{11}^{\mathrm{comp}}(\lambda)
=\frac{\mu}{J(\lambda)}\exp\bigl(k(P-7)\bigr)\bigl(\lambda^2-y(\lambda)\bigr).
\end{equation}
At $\lambda=1$ one has $y=1$, $J=1$, $P=7$ and $\sigma_{11}^{\mathrm{comp}}=0$. Differentiating the closed form, or invoking the linearized moduli of Section 3.2, gives the reference slope

\begin{equation}
\frac{\mathrm d\sigma_{11}^{\mathrm{comp}}}{\mathrm d\lambda}\Big|_{\lambda=1}=\frac{8\mu}{3},
\end{equation}
which is the Young modulus $E_Y=8\mu/3$ of the infinitesimal law; the constitutive parameter is the shear modulus $\mu$, and $E=\mu/2$ is not the Young modulus.

Parametrize the path by $t=\log\lambda$ and write $q(t)$ for the lateral Hencky strain. Then

\begin{equation}
q(t)=\frac14\log\frac{3}{e^{t}(e^{t}+2)},
\qquad
q'(t)=-\frac12\frac{e^{t}+1}{e^{t}+2}.
\end{equation}
Along $\xi(t)=(t,q(t),q(t))$ the second-order work of uniaxial loading equals the axial stress slope in the logarithmic parametrization,

\begin{equation}
A_k\,\xi'^{\mathsf T}T_k\xi'=\frac{\mathrm d\sigma_1}{\mathrm dt}.
\end{equation}
Figures~\ref{fig:A1}(a) and \ref{fig:A2}(a) show the axial Cauchy stress and its logarithmic-strain tangent, respectively. Restricting the same energy $W_k$ to the isochoric constraint $J=1$ gives $I_1=\lVert F\rVert^2$ and

\begin{equation}
\left.W_k(F)\right|_{J=1}
=\frac{\mu}{2k}\left[\exp\bigl(k(I_1-3)\bigr)-1\right].
\end{equation}
With $b=k$, this expression coincides with the Demiray/Fung exponential energy \citep{Fung1967,Demiray1972}

\begin{equation}
W_{\mathrm D}(F)
=\frac{\mu}{2b}\left[\exp\bigl(b(I_1-3)\bigr)-1\right].
\end{equation}
For isochoric uniaxial deformation $F_{\mathrm{inc}}=\operatorname{diag}(\lambda,\lambda^{-1/2},\lambda^{-1/2})$, elimination of the constraint pressure by the lateral traction-free condition gives

\begin{equation}
\sigma_{11}^{\mathrm{inc}}(\lambda)
=\mu\exp\bigl(k(\lambda^2+2\lambda^{-1}-3)\bigr)
\bigl(\lambda^2-\lambda^{-1}\bigr),
\end{equation}
with reference slope $3\mu$. This restriction to $J=1$ gives the analytic connection between $W_k$ and the Demiray/Fung type.

\subsection{Traction-free equibiaxial deformation}

Now take

\begin{equation}
F=\operatorname{diag}(\lambda,\lambda,u),\qquad \sigma_3=0,\qquad \lambda>0,
\end{equation}
so that $J=\lambda^2 u$. The thickness condition $g_3=0$ reads

\begin{equation}
2u^2+J-3J^{-1}=0,
\end{equation}
or equivalently

\begin{equation}
u^2\bigl(2\lambda^2 u+\lambda^4\bigr)=3.
\end{equation}
For $u>0$ the left-hand side is strictly increasing from $0$ to $\infty$, so a unique positive thickness exists for every $\lambda>0$. In logarithmic coordinates $\rho=\log\lambda$ and $q=\log u$, implicit differentiation of the residual yields

\begin{equation}
q'(\rho)=-\frac{2\bigl(e^{2\rho}u+e^{4\rho}\bigr)}{3e^{2\rho}u+e^{4\rho}},
\end{equation}
and in particular $q'(0)=-1$. The in-plane stresses coincide. Using $g_3=0$ to replace $J-3J^{-1}$ by $-2u^2$ gives $g_1=g_2=2(\lambda^2-u^2)$ and

\begin{equation}
\sigma_1=\sigma_2
=\frac{\mu}{J}\exp\bigl(k(P-7)\bigr)\bigl(\lambda^2-u^2\bigr),
\qquad
P=2\lambda^2+3u^2+2J.
\end{equation}
Along $\xi(\rho)=(\rho,\rho,q(\rho))$ one has $\xi'=(1,1,q')$. Because $\sigma_3\equiv0$ in this deformation, the full principal-space second-order work in equibiaxial loading is

\begin{equation}
A_k\,\xi'^{\mathsf T}T_k\xi'=2\frac{\mathrm d\sigma_1}{\mathrm d\rho}.
\end{equation}
Figures~\ref{fig:A1}(b) and \ref{fig:A2}(b) show the equibiaxial in-plane stress and the corresponding incremental response, respectively.

\subsection{Simple shear}

Let $\gamma\in\mathbb R$ denote the shear amount and set
\begin{equation}
F=\begin{pmatrix}1&\gamma&0\\0&1&0\\0&0&1\end{pmatrix}.
\end{equation}
Then $J=1$ and $\lVert F\rVert^2=3+\gamma^2$, so that $P=7+\gamma^2$. The left Cauchy-Green tensor is

\begin{equation}
B=FF^{\mathsf T}
=\begin{pmatrix}
1+\gamma^2&\gamma&0\\
\gamma&1&0\\
0&0&1
\end{pmatrix}.
\end{equation}
The tensor formula therefore collapses to $\sigma=\mu\exp(k\gamma^2)\,(B-\Id)$. The only nonzero Cauchy components are the shear and Poynting stresses

\begin{equation}
\sigma_{12}=\mu\gamma\exp(k\gamma^2),
\qquad
\sigma_{11}=\mu\gamma^2\exp(k\gamma^2).
\end{equation}
In particular, $\sigma_{12}$ is odd in $\gamma$ while $\sigma_{11}$ is even and nonnegative. Differentiating the shear stress gives

\begin{equation}
\frac{\mathrm d\sigma_{12}}{\mathrm d\gamma}
=\mu\exp(k\gamma^2)\bigl(1+2k\gamma^2\bigr)>0,
\end{equation}
for every $k>0$ and every real $\gamma$. Figures~\ref{fig:A1}(c) and \ref{fig:A2}(c) show the shear stress and its tangent, respectively. This scalar shear-stress curve is therefore strictly increasing.

\subsection{Plane-strain pure shear}

Let the principal stretches be $(e^{t},e^{-t},1)$. Then $J=1$ and

\begin{equation}
P-7=e^{2t}+e^{-2t}-2=4\sinh^2 t.
\end{equation}
The principal Cauchy stresses satisfy $g_1-g_2=2e^{2t}-2e^{-2t}=4\sinh(2t)$, hence

\begin{equation}
\sigma_1-\sigma_2
=2\mu\exp\bigl(4k\sinh^2 t\bigr)\sinh(2t).
\end{equation}
Differentiating with respect to $t$ produces the closed slope

\begin{equation}
\frac{\mathrm d}{\mathrm dt}(\sigma_1-\sigma_2)
=2\mu\exp\bigl(4k\sinh^2 t\bigr)
\bigl(4k\sinh^2(2t)+2\cosh(2t)\bigr).
\end{equation}
Figures~\ref{fig:A1}(d) and \ref{fig:A2}(d) show the principal-stress difference and its derivative, respectively. Along $\xi(t)=(t,-t,0)$ one has $\xi'=(1,-1,0)$, and the full principal-space second-order work satisfies

\begin{equation}
A_k\,\xi'^{\mathsf T}T_k\xi'
=\frac{\mathrm d}{\mathrm dt}(\sigma_1-\sigma_2)>0,
\end{equation}
for every $k>0$ and $t\in\mathbb R$.

Appendix A treats homogeneous deformation. Appendix~B proves Theorem~B.1 for the pressure-controlled plane-strain annulus and compares $W_k$ with NH-CG, $W_{\mathrm{eH}}$, and $W_{\exp}$ at radial-stress inversion, reduced convexity, global shooting, pressure monotonicity, and endpoint growth. Appendix~C proves global pressure-stretch inversion for the incompressible Demiray family with $b\ge b_*$ and derives the limit point of incompressible neo-Hooke.

\setcounter{figure}{0}
\renewcommand{\thefigure}{B\arabic{figure}}
\renewcommand{\theHfigure}{B\arabic{figure}}
\setcounter{theorem}{0}
\renewcommand{\thetheorem}{B.\arabic{theorem}}
\renewcommand{\theHtheorem}{B.\arabic{theorem}}
\section{Unique radial equilibrium and pressure-radius inversion in a plane-strain annulus}

In finite elasticity, a plane-strain annulus with a traction-free outer wall and a controlled inner radius or inner pressure is a classical semi-inverse boundary-value problem \citep{Ciarlet1988}. The reference body is the ring $A\le R\le B$, the axial stretch is fixed at $1$, and the radial deformation is described by $r(R)$. Prescribing the current inner radius $r(A)=a$ and imposing $\sigma_r(B)=0$ make the cavity pressure $p_i=-\sigma_r(A)$ the reaction required to maintain that deformation. Because the hoop stretch $u=r/R$ and radial stretch $v=r'$ generally differ, the solution requires one to recover $v$ uniquely from $(u,\sigma_r)$ at each material point and then integrate radial equilibrium.

Among the four energies, only $W_k$ is both polyconvex and globally \TSTSM. This appendix asks what complete result can be established for an annular boundary-value problem governed by this energy. For $W_k$ with $k\ge k_*$, Theorem~B.1 gives a unique smooth radial equilibrium and proves that the inner pressure increases strictly from zero to infinity with the current cavity radius. Polyconvexity supplies convexity in the minors, the global constitutive property used in standard variational existence theory for finite elasticity, while \TSTSM directly gives $\partial\sigma_r/\partial\log v>0$ in the radial problem. The complete result additionally uses strict convexity of the plane-strain reduction of $W_k$, a dedicated positive-definiteness argument for the pressure-derivative quadratic form, global shooting estimates, and exponential growth.

Three comparator energies show the distinct roles of these properties in the annular problem. The compressible Ciarlet-Geymonat neo-Hooke law (NH-CG) is polyconvex, but its plane-strain reduced Hessian changes sign. The exponentiated Hencky energy $W_{\mathrm{eH}}$ with $k_{\mathrm{eH}}=0.40>3/8$ satisfies \TSTSM but is not globally rank-one convex. The stress-free $I_1$ exponential $W_{\exp}$ is polyconvex and has a strictly convex plane-strain reduction, but fails \TSTSM at finite strain. Section~B.2 gives the reduced formulas and Hessians for all four energies.

\subsection{Boundary-value problem and main theorem}

We first specify the annular kinematics, boundary conditions, and observable load, and then state the conclusions proved in the subsequent subsections. Let $0<A<B<\infty$ and impose the radial plane-strain ansatz

\begin{equation}
r=r(R),
\qquad
F=\operatorname{diag}\left(r'(R),\frac{r(R)}R,1\right).
\end{equation}
Write

\begin{equation}
v=r'(R)>0,
\qquad
u=\frac{r(R)}R>0,
\qquad
J=vu.
\end{equation}
Figure~\ref{fig:B1}(a) shows this reference annulus, with the axial stretch fixed at $1$. The inner radius is prescribed as $r(A)=a\ge A$, and the outer boundary is traction free:

\begin{equation}
\sigma_r(B)=0.
\end{equation}
The inner pressure is

\begin{equation}
p_i(a)=-\sigma_r(A).
\end{equation}
Figure~\ref{fig:B1}(b) shows the deformed annulus, the cavity pressure $p_i$, and the traction-free outer wall $\sigma_r(B)=0$. The equilibrium analysis below uses precisely these kinematics and boundary conditions.

\begin{figure}[ht]
\centering
\includegraphics[width=0.82\textwidth]{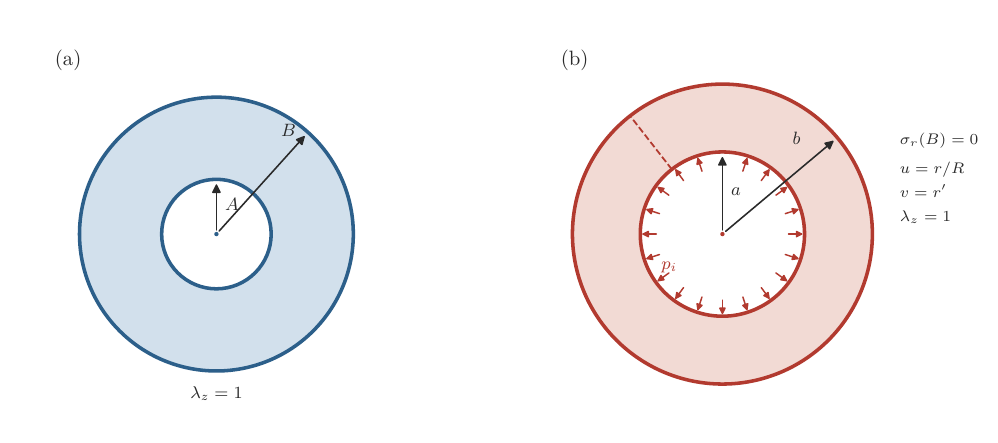}
\caption{Kinematics and boundary data for the radial plane-strain annulus. Panel (a) shows the reference body $A\le R\le B$ with axial stretch $\lambda_z=1$. Panel (b) shows the deformed radii $a=r(A)$ and $b=r(B)$, the cavity pressure $p_i=-\sigma_r(A)$, the traction-free outer wall $\sigma_r(B)=0$, and the principal stretches $(v,u,1)=(r',r/R,1)$.}
\label{fig:B1}
\end{figure}

\begin{theorem}\label{thm:B1} Let $0<A<B<\infty$, $\mu>0$, and $k\ge k_*$. For every prescribed inner radius $a\ge A$, the radial plane-strain boundary-value problem for $W_k$ with $r_a(A)=a$ and $\sigma_r(B)=0$ has exactly one $C^\infty$ solution $r_a$, and $r_a'(R)>0$ for every $R\in[A,B]$. Among radial plane-strain deformations with the same inner-boundary displacement, $r_a$ is the unique minimizer of the total strain energy, and it depends smoothly on $a$. The corresponding inner pressure satisfies

\begin{equation}
p_i(A)=0,
\qquad
\frac{dp_i}{da}>0,
\qquad
p_i(a)\to\infty
\quad(a\to\infty).
\end{equation}
Consequently,

\begin{equation}
p_i:[A,\infty)\to[0,\infty)
\end{equation}
is a smooth, strictly increasing bijection. Its inverse $a=a(p_i)$ is smooth for $p_i>0$ and admits a one-sided smooth extension at $p_i=0$.

\end{theorem}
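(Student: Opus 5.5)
The plan is to reduce the boundary-value problem to a first-order system in $(r,\sigma_r)$ and solve it by shooting in the unknown inner pressure. First, the radial stress along the path $x=(\log v,\log u,0)$ is $\sigma_r=A_k g_1$, with $g_1=2v^2+J-3J^{-1}$ and $J=uv$. \TSTSM for $k\ge k_*$, tested on the increment $h=(1,0,0)$, gives $\partial\sigma_r/\partial\log v>0$. For fixed $u>0$, $\sigma_r\to-\infty$ as $v\to0$, through the factor $3J^{-1}$ times $e^{-s}e^{3k/J}$. As $v\to\infty$, $\sigma_r\to+\infty$ through the exponential growth, as in the properness estimate of Section~3.4. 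Hence $v=\hat v(u,\sigma_r)$ is a smooth global inverse by the implicit function theorem.

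Radial equilibrium, written in reference coordinates, becomes the autonomous-type system
\begin{equation*}
r'=\hat v(r/R,\sigma_r),\qquad
\frac{d\sigma_r}{dR}=\frac{\hat v}{r}\bigl(\sigma_\theta-\sigma_r\bigr),
\end{equation*}
where $\sigma_\theta$ is evaluated at $(\hat v,u)$. Here $\sigma_\theta-\sigma_r=A_k(2u^2-2v^2)$, which has an explicit sign.

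\textbf{Existence and uniqueness.} I would use the variational structure. The plane-strain reduced density $\omega(v,u)=W_k(\operatorname{diag}(v,u,1))$ is strictly convex in $(v,u)$, since $P=v^2+u^2+1+3/(uv)+uv$ and $1/(uv)$ and $uv$ are handled through the exponential of the convex function $v^2+u^2+3/(uv)+uv$. This has to be checked. I would verify the reduced Hessian positivity from $\exp(kP)$ by showing $P$ itself is convex on $\{u,v>0\}$ after absorbing the indefinite $uv$ term into $v^2+u^2$, since $uv\le\tfrac12(u^2+v^2)$ and $v^2+u^2+uv$ is convex.

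The functional $E[r]=2\pi\int_A^B\omega(r',r/R)R\,dR$ is then strictly convex on the affine class $r(A)=a$. Coercivity follows from the barrier $3/J$ and exponential growth, so a unique minimizer exists. Its natural boundary condition is $\sigma_r(B)=0$, and elliptic regularity (one-dimensional Euler--Lagrange, $\omega_{vv}>0$) gives $C^\infty$ and $r'>0$. Conversely, any smooth solution is a critical point of a strictly convex functional, hence is the minimizer. Smooth dependence on $a$ follows from the implicit function theorem applied to the shooting map $\Phi(a,p)=\sigma_r(B;a,p)$, whose nondegeneracy $\partial_p\Phi\neq0$ is the linearized uniqueness given by strict convexity (second variation positive).

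\textbf{Pressure properties.} $p_i(A)=0$ holds because $r=R$ gives $g=0$ everywhere. For $dp_i/da>0$, I would differentiate in $a$: $w=\partial_a r$ solves the Jacobi equation with $w(A)=1$ and natural condition at $B$. Testing the Euler--Lagrange identity gives
\begin{equation*}
A\,\frac{dp_i}{da}=\int_A^B (w',w/R)\,\nabla^2\omega\,(w',w/R)^{\mathsf T}R\,dR>0,
\end{equation*}
which is a Clapeyron-type identity, using the sign convention $p_i=-\sigma_r(A)$ with the factor of the inner boundary. This is the ``pressure-derivative quadratic form'', and strict convexity of $\omega$ makes it positive.

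For $p_i\to\infty$, note that $r'>0$ and $r(A)=a$ give $u\ge a/B$ on the whole annulus. If $v$ stayed bounded somewhere, the hoop stress would still be huge. I would integrate $d\sigma_r/dR=(\hat v/r)(\sigma_\theta-\sigma_r)$ from $A$ to $B$ and use $\sigma_r(B)=0$, so that $p_i=\int_A^B (\sigma_\theta-\sigma_r)\,dr/r$. I would then bound this below by $\int A_k(2u^2-2v^2)\,dr/r$, splitting the region where $u\ge v$ from the region where $u<v$. In the latter region $\sigma_r$ is increasing in $R$, so it is bounded by its value at $B$, namely zero. This needs care.

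\textbf{Main obstacle.} A cleaner route to $p_i\to\infty$ is an energy bound. Since $dE/da=2\pi A\,p_i(a)$ and $E(a)\ge c\exp(k\,a^2/B^2)$, because $u\ge a/B$ everywhere, $p_i$ cannot stay bounded: monotonicity of $p_i$ plus $E(a)\to\infty$ faster than linearly force $p_i\to\infty$.

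The inverse $a(p_i)$ is smooth for $p_i>0$ by $dp_i/da>0$. Its one-sided smooth extension at $0$ comes from the smoothness of $p_i$ up to $a=A$, since the shooting system is smooth in a neighborhood of the trivial solution. I expect the delicate step to be verifying strict convexity of $\omega$ globally; if $P$ is not convex, I would instead show convexity of $\exp(kP)$ via $\nabla^2P+k\nabla P\otimes\nabla P\succ0$.
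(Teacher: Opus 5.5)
\textbf{Main gap: the pressure-monotonicity step.} Your Clapeyron identity $dE/da=2\pi A\,p_i$ treats $\omega_v$ as the Cauchy radial stress. In fact $\omega_v=\partial_vW_k(\operatorname{diag}(v,u,1))$ is the nominal (first Piola--Kirchhoff) stress, $\omega_v=J\sigma_r/v=u\,\sigma_r$. Since $u(A)=a/A$, the boundary term is $E'(a)=-2\pi A\,\omega_v(A)=2\pi a\,p_i(a)$. Differentiating once more, strict convexity of $\omega$ gives only
\[
2\pi\bigl(p_i+a\,p_i'\bigr)=E''(a)=2\pi\int_A^B\bigl(w',w/R\bigr)\,\nabla^2\omega\,\bigl(w',w/R\bigr)^{\mathsf T}R\,\mathrm dR>0 ,
\]
that is, $(a\,p_i)'>0$. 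This says the total nominal force on the cavity wall increases, not the Cauchy pressure. For $a>A$ you have $p_i>0$, so $a\,p_i'=(ap_i)'-p_i$ is a difference of two positive quantities and has no sign. Only at $a=A$, where $p_i=0$, does $p_i'(A)>0$ follow.

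The thin-wall limit shows this is a real obstruction. There $p_i\approx\varepsilon f(u)/u$ with $f=\omega_u$ along the free-face curve $\omega_v=0$. Convexity makes $f$ increasing but says nothing about $f(u)/u$, which is exactly the limit-point mechanism. Note also that your argument never uses $k\ge k_*$. It would apply verbatim to every $k>0$ and to $W_{\exp}$, whose reductions are also strictly convex, and the paper deliberately does not claim pressure monotonicity for $W_{\exp}$.

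The missing idea is in Section~B.5. There $2\pi p_i$ is rewritten, via radial equilibrium, as a wall integral and subtracted from the second variation, giving
\[
2\pi a\frac{dp_i}{da}=\int_{\rm wall}\bigl[h^{\mathsf T}T_kh+(\sigma_\theta-\sigma_r)h_\theta(h_r-h_\theta)\bigr]\,dv .
\]
The Baker--Ericksen term is indefinite. Positivity is then proved for the explicit $2\times2$ matrix $M$, using $c=\psi''(s)-1/(4k)\ge0$ (this is where $k\ge k_*$ enters) and the polynomial estimate $\mathcal R(v,u)\ge0$ for $\det M$.

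\textbf{Secondary gap: existence and regularity.} The direct method only yields a $W^{1,2}$ minimizer with $r'>0$ almost everywhere. The phrase ``one-dimensional elliptic regularity'' does not by itself give the Euler--Lagrange equation, smoothness, or a positive lower bound on $r'$. Because of the barrier $3/(vu)$ and the exponential growth, two-sided variations $r+\varepsilon\zeta$ need not have finite energy. The growth conditions usually invoked to derive the Euler--Lagrange equation for radial minimizers (bounds of the type $|v\,\omega_v|\le K(\omega+1)$) fail here, since $v\,\omega_v/\omega$ is unbounded both as $v\to0$ and as $v\to\infty$.

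The paper avoids regularity theory entirely. It integrates the first-order system inward from the free outer wall with parameter $q=u(B)\ge1$. It then uses invariance of $\{v<u\}$, the bounds $q\le u\le qB/A$, and a Gronwall estimate $y_\tau\le C_q(1+y)$ for $y=-\sigma_r$ to exclude blow-up. By continuity, $q\mapsto u(A)$ covers $[1,\infty)$, and convexity then identifies this smooth orbit as the unique minimizer. You set up the shooting system but did not supply these a priori bounds.

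\textbf{What is sound.} The convexity check of the reduced density, uniqueness, smooth dependence via the nondegenerate Jacobi operator, and the exponential energy lower bound are all fine. With the corrected identity $E'=2\pi a\,p_i$, convexity of $E$ gives $p_i(a)\ge E(a)/(2\pi a(a-A))\to\infty$ without appealing to monotonicity.
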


Theorem B.1 follows from four linked mechanisms. Section~B.2 combines the radial-tangent positivity supplied by \TSTSM with the endpoint stress limits to obtain pointwise radial-stress inversion, and separately proves strict convexity of the plane-strain reduction. Section~B.3 uses global orbit estimates specific to $W_k$ to establish existence. Section~B.4 then obtains variational uniqueness and smooth dependence from reduced strict convexity and invertibility of the Jacobi operator. Section~B.5 combines the TSTS tangent term with the geometric term in the annular pressure derivative and proves by explicit algebra for $W_k$ that the resulting matrix $M$ is positive definite. Section~B.6 uses exponential growth to exclude a finite pressure ceiling and thereby completes global pressure-radius inversion.

\subsection{Plane-strain energy and radial-stress inversion}

Global \TSTSM first gives radial-tangent positivity at fixed $u$ on the plane-strain slice. Pointwise inversion additionally requires the radial stress to cover the real line, whereas the subsequent variational uniqueness requires joint strict convexity of the two-variable reduced energy. We therefore restrict the three-dimensional energy to $F=\operatorname{diag}(v,u,1)$ and establish these two properties separately. For NH-CG, $W_{\mathrm{eH}}$, and $W_{\exp}$, we likewise calculate the radial inversion and reduced Hessian to distinguish pointwise inversion from joint convexity. The plane-strain energy density is

\begin{equation}
w_k(v,u)
=\frac{\mu}{2k}
\left[
\exp\!\left(k\Phi(v,u)\right)-1
\right],
\qquad
\Phi(v,u)
=v^2+u^2+1+\frac3{vu}+vu-7.
\end{equation}
We use the lower-semicontinuous extension $w_k=+\infty$ whenever $v\le0$ or $u\le0$. The Hessian of its kernel $\Phi$ satisfies

\begin{equation}
\Phi_{,vv}=2+\frac{6}{v^3u}>0,
\end{equation}
\begin{equation}
\det\Diff^2\Phi
=\frac{3(J^2+3)^2}{J^4}
+\frac{12(v-u)^2}{J^3}>3.
\end{equation}
Hence

\begin{equation}
\Diff^2 w_k
=\frac{\mu}{2}e^{k\Phi}
\left(\Diff^2\Phi+k\nabla\Phi\otimes\nabla\Phi\right)\succ0
\end{equation}
throughout the positive quadrant. The rank-one update cannot destroy this positivity.

Under the standard calibration \citep{CiarletGeymonat1982}, the three-dimensional Ciarlet-Geymonat neo-Hooke energy is
\begin{equation}
W_{\mathrm{NH\text{-}CG}}(F)
=\mu\left[
\frac12\lVert F\rVert^2
+\frac7{12}J^2
-\frac{13}{6}\log J
-\frac{25}{12}
\right],
\qquad
J=\det F.
\end{equation}
Restricted to the plane-strain stretch $F=\operatorname{diag}(v,u,1)$, this becomes

\begin{equation}
w_{\mathrm{NH\text{-}CG}}(v,u)
=\mu\left[
\frac12(v^2+u^2+1)
+\frac7{12}(vu)^2
-\frac{13}6\log(vu)
-\frac{25}{12}
\right],
\end{equation}
whose Hessian determinant with respect to $(v,u)$ is

\begin{equation}
\frac1{\mu^2}\det\Diff^2 w_{\mathrm{NH\text{-}CG}}(v,u)
=\left(1+\frac76u^2+\frac{13}{6v^2}\right)
\left(1+\frac76v^2+\frac{13}{6u^2}\right)
-\frac{49}{9}v^2u^2.
\end{equation}
The remaining comparison energies are the exponentiated Hencky energy \citep{Neff2015ExpHencky} and a stress-free $I_1$ exponential. Their three-dimensional forms, with $k_{\mathrm{eH}}=0.40$, are
\begin{equation}
\ann{
W_{\mathrm{eH}}(F)
=\frac{\mu}{k_{\mathrm{eH}}}
\exp\bigl(\,k_{\mathrm{eH}}\lVert\log V\rVert^2\bigr),
\qquad
W_{\exp}(F)
=\frac{\mu}{2}
\bigl[\exp(\lVert F\rVert^2-2\log\det F-3)-1\bigr].
}
\end{equation}
Restricted to the plane-strain stretch $F=\operatorname{diag}(v,u,1)$, they become

\begin{equation}
w_{\mathrm{eH}}(v,u)
=\frac{\mu}{k_{\mathrm{eH}}}
\exp\!\left(k_{\mathrm{eH}}\bigl[(\log v)^2+(\log u)^2\bigr]\right),
\end{equation}
\begin{equation}
w_{\exp}(v,u)
=\frac{\mu}{2}\left[\exp\!\left(\phi_{\exp}(v,u)\right)-1\right],
\qquad
\phi_{\exp}=v^2+u^2-2\log(vu)-2.
\end{equation}
Because $\Diff^2\phi_{\exp}=\operatorname{diag}(2+2/v^2,2+2/u^2)\succ0$, the increasing convex outer exponential makes $w_{\exp}$ globally strictly convex on the positive quadrant. This reduced convexity does not restore three-dimensional \TSTSM. The NH-CG determinant above changes sign. For $w_{\mathrm{eH}}$, direct differentiation at the equal-stretch state $(v,u)=(e^2,e^2)$ gives
\begin{equation}
\frac1{\mu^2}\det\Diff^2w_{\mathrm{eH}}(e^2,e^2)
=-\frac{108}{5}e^{-8/5}<0,
\end{equation}
so its plane-strain restriction is not globally convex even though the three-dimensional Cauchy stress satisfies \TSTSM. Figure~\ref{fig:B2} plots the same normalized Hessian determinant for all four energies.

\begin{figure}[ht]
\centering
\includegraphics[width=\textwidth]{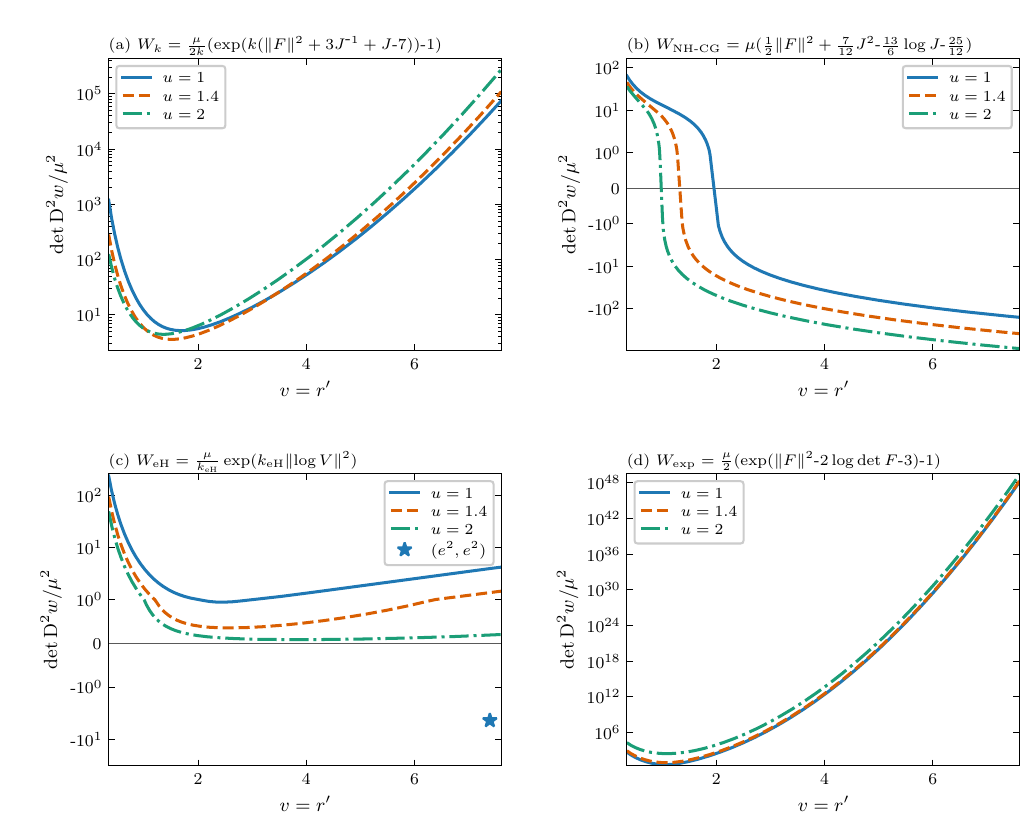}
\caption{Reduced plane-strain Hessian comparison. Every panel plots the same dimensionless quantity $\det\mathrm{D}^2w/\mu^2$ against $v=r'$ on the common slices $u=1,1.4,2$ and the common window $0.35\le v\le7.6$. The positive panels (a) and (d) use logarithmic ordinates; the sign-changing panels (b) and (c) use symmetric-logarithmic ordinates, with the horizontal line marking zero. Panel titles give the four energies; (a) uses $k=k_*$ and (c) uses $k_{\mathrm{eH}}=0.40$. The star in panel (c) is the analytic equal-stretch witness $(v,u)=(e^2,e^2)$, where the determinant equals $-(108/5)e^{-8/5}$. The reduced Hessians of $W_k$ and $W_{\exp}$ are positive definite globally, whereas the determinants for NH-CG and $W_{\mathrm{eH}}$ change sign.}
\label{fig:B2}
\end{figure}

For fixed $u>0$, the radial Cauchy stress satisfies

\begin{equation}
\frac{d\sigma_r}{d\log v}=T_{11}>0,
\end{equation}
where $T_{11}$ is the corresponding principal entry of the TSTS tangent of Section 3.3. Moreover,

\begin{equation}
\sigma_r(v,u)\to-\infty
\quad(v\downarrow0),
\qquad
\sigma_r(v,u)\to+\infty
\quad(v\to\infty).
\end{equation}
Thus $v\mapsto\sigma_r(v,u)$ is a $C^\infty$ diffeomorphism $(0,\infty)\to\mathbb R$. Denote its inverse by

\begin{equation}
v=\widehat v(u,s),
\qquad
s=\sigma_r.
\end{equation}
The free-face relation $s=0$ reduces to

\begin{equation}
2uv^3+u^2v^2=3.
\end{equation}
Its left-hand side is strictly increasing from zero to infinity in $v>0$, so the positive free-face stretch is unique for every $u>0$.

Thus, for every $(u,s)$, the radial stretch $v=\widehat v(u,s)$ is uniquely determined, and the free outer wall $s=0$ likewise selects a unique radial stretch. This supplies the local stress inversion required by radial equilibrium.

All three comparators also admit pointwise radial-stress inversion. For $W_{\mathrm{eH}}$, it follows from global \TSTSM. For NH-CG and $W_{\exp}$, direct calculation gives, respectively,
\begin{equation}
\frac1\mu\frac{\partial\sigma_r^{\mathrm{NH\text{-}CG}}}{\partial\log v}
=\frac vu+\frac76vu+\frac{13}{6vu}>0,
\end{equation}
\begin{equation}
\frac{J}{\mu e^{\phi_{\exp}}}
\frac{\partial\sigma_r^{\exp}}{\partial\log v}
=2v^4-3v^2+3>0.
\end{equation}
In each case the radial stress tends to $-\infty$ as $v\downarrow0$ and to $+\infty$ as $v\to\infty$. Thus all four energies admit pointwise radial-stress inversion, while global joint convexity holds for $W_k$ and $W_{\exp}$. For $W_k$, global annular solvability and pressure monotonicity require the dedicated arguments of Sections~B.3-B.5. Section B.3 inserts $\widehat v$ for $W_k$ into the equilibrium equations and integrates inward from the traction-free outer wall.

\subsection{Inward integration from the traction-free outer wall: global existence}

The pointwise inversion of Section~B.2 converts radial equilibrium into a smooth first-order system, but local inversion alone does not ensure that an orbit reaches the inner wall or covers every prescribed cavity radius. For $W_k$, we use the outer-wall hoop stretch $q=u(B)$ as a parameter, integrate inward from $s(B)=0$, prove that the trajectory cannot blow up before reaching $R=A$, and show that the resulting family covers every $a\ge A$. Set $t=\log R$. Radial equilibrium becomes

\begin{equation}
\dot u=\widehat v(u,s)-u,
\end{equation}
\begin{equation}
\dot s
=2A_k\frac{\widehat v(u,s)}u
\left[u^2-\widehat v(u,s)^2\right],
\qquad
A_k
=\frac{\mu}{2u\widehat v(u,s)}e^{k\Phi}.
\end{equation}
Start at the free outer wall with

\begin{equation}
u(B)=q\ge1,
\qquad
s(B)=0,
\end{equation}
and integrate towards $A$. The line $v=u$ is an equilibrium line. Uniqueness of the autonomous system prevents a non-equilibrium orbit from crossing it. In reverse time

\begin{equation}
\tau=\log\frac BR,
\qquad
y=-s,
\end{equation}
one obtains

\begin{equation}
0<u_\tau=u-v<u,
\qquad
q\le u(\tau)\le qe^\tau\le q\frac BA.
\end{equation}
Along a non-equilibrium reverse orbit, $y>0$. The constitutive formulas give

\begin{equation}
y=\frac{A_k}{vu}
\left(3-2uv^3-u^2v^2\right),
\qquad
y_\tau=2A_k\frac vu\left(u^2-v^2\right).
\end{equation}
Consequently,

\begin{equation}
\frac{y_\tau}{y}
=\frac{2v^2(u^2-v^2)}
{3-2uv^3-u^2v^2}
\longrightarrow0
\qquad(v\downarrow0).
\end{equation}
The bound on $u$ makes $y_\tau/(1+y)$ bounded over the remaining part of the orbit. Hence, for a constant depending on $q$, $A$, and $B$,

\begin{equation}
y_\tau\le C_q(1+y).
\end{equation}
Gronwall's inequality excludes finite-time blow-up. Every $q\ge1$ therefore generates a positive orbit reaching $R=A$.

Let $\Psi(q)=u(A)$.
Continuous dependence and the preceding inequalities give

\begin{equation}
\Psi(1)=1,
\qquad
\Psi(q)\ge q.
\end{equation}
The connected image $\Psi([1,\infty))$ contains the value $1$ and is unbounded, while remaining in $[1,\infty)$. Hence it equals $[1,\infty)$. For every $a/A\ge1$, at least one strictly orientation-preserving radial field exists.

The inward solution therefore reaches $R=A$ for every outer-wall parameter $q\ge1$, and every prescribed inner radius $a\ge A$ has at least one radial equilibrium. Existence does not yet exclude two outer-wall states from reaching the same inner radius.

Pointwise inversion gives analogous first-order shooting systems for the three comparators. For $W_k$, the bounded ratio $y_\tau/(1+y)$ and the surjective outer-wall parametrization establish global shooting coverage. Figure~\ref{fig:B3} reports numerically integrated comparator branches; Sections~B.5 and B.6 compare their computed pressure slopes and the analytic endpoint results available for NH-CG and $W_{\exp}$. Section B.4 uses strict convexity to prove uniqueness and the linearized boundary-value problem to prove smooth dependence on $a$.

\begin{figure}[ht]
\centering
\includegraphics[width=\textwidth]{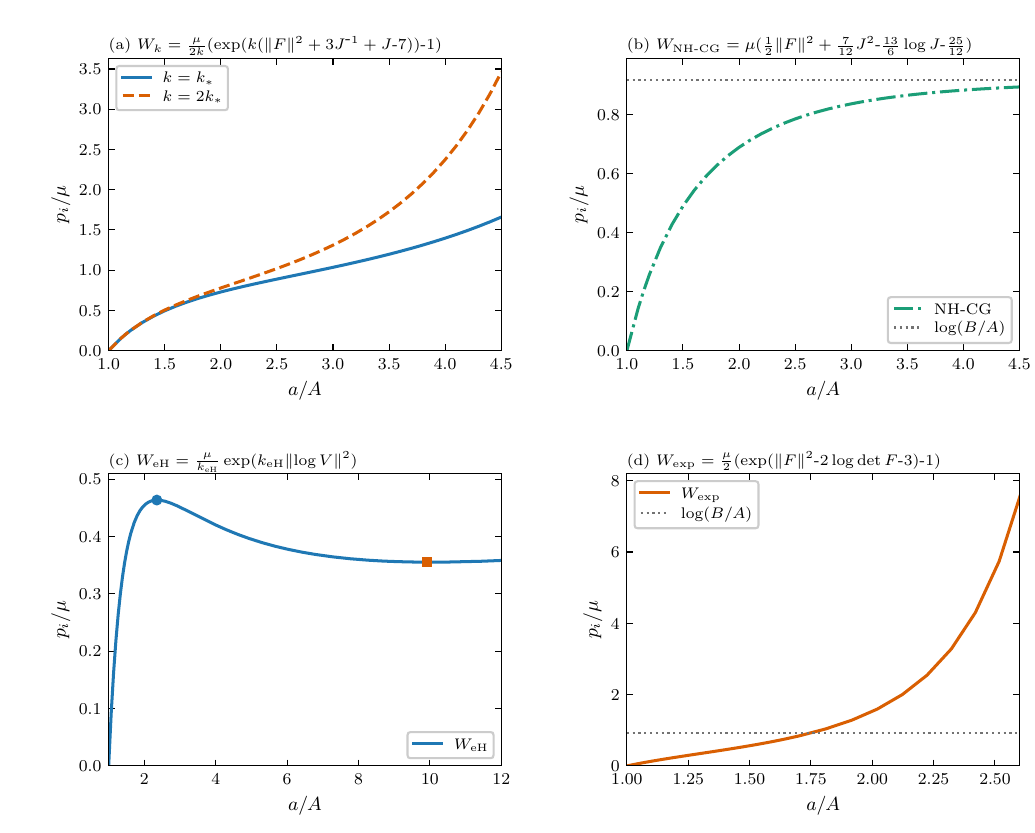}
\caption{Four-energy pressure comparison for the same annulus, $A=1$ and $B=2.5$. Panel titles give the four energies. Every ordinate is $p_i/\mu$, where $\mu$ is the infinitesimal shear modulus of the corresponding energy. Panel (a) shows computed $W_k$ curves for $k=k_*$ and $k=2k_*$ on $1\le a/A\le4.5$; Theorem~B.1 proves that both are globally smooth, strictly increasing, and unbounded. Panel (b) shows the computed NH-CG branch on the same window and the exact upper bound $\log(B/A)$. Panel (c) shows dense reverse-integration data for $W_{\mathrm{eH}}$ at $k_{\mathrm{eH}}=0.40$ on $1\le a/A\le12$; the circle and square mark the computed local maximum $(2.350,0.4635)$ and local minimum $(9.912,0.3553)$. Panel (d) shows the computed $W_{\exp}$ branch on $1\le a/A\le2.60$ together with the NH-CG ceiling; the curve continues beyond the displayed window to the value reported in the text. The NH-CG bound and the endpoint conclusions for $W_k$ and $W_{\exp}$ are analytic. The $W_{\mathrm{eH}}$ extrema and all displayed branch samples are numerical.}
\label{fig:B3}
\end{figure}
\FloatBarrier

\subsection{Energy minimization, uniqueness, and smooth dependence}

Section~B.3 constructed at least one radial equilibrium with the required boundary data. Radial uniqueness uses the strict convexity of the plane-strain reduction proved separately in Section~B.2. Strict convexity gives the unique global energy minimizer, while invertibility of the Jacobi operator gives smooth dependence on the prescribed inner radius. Define

\begin{equation}
\mathcal A_a
=\left\{
r\in W^{1,2}(A,B)
\mid
r(A)=a,\ r'\ge0\ \text{a.e.}
\right\},
\end{equation}
\begin{equation}
\mathcal I[r]
=2\pi\int_A^B
R\,w_k\!\left(r',\frac rR\right)dR.
\end{equation}
The extended-value convention makes $\mathcal I[r]=+\infty$ if $(r',r/R)\notin(0,\infty)^2$ on a set of positive measure. The orbit constructed above satisfies the Euler-Lagrange equation and the natural outer condition. Strict convexity gives, for every $\widetilde r\in\mathcal A_a$,

\begin{equation}
\mathcal I[\widetilde r]
\ge
\mathcal I[r_a]
+D\mathcal I[r_a](\widetilde r-r_a)
=\mathcal I[r_a],
\end{equation}
with equality only for $\widetilde r=r_a$. Thus $r_a$ is the unique global radial minimizer. Since its orbit remains in a compact subset of $v,u>0$, standard ODE regularity gives

\begin{equation}
r_a\in C^\infty([A,B]).
\end{equation}
For variations with $\zeta(A)=0$, the linearized natural boundary condition is

\begin{equation}
\left[
w_{k,vv}\zeta'
+w_{k,vu}\frac{\zeta}{R}
\right]_{R=B}=0.
\end{equation}
The Jacobi operator is

\begin{equation}
\mathcal L_a\zeta
=-\left[
R\left(
w_{k,vv}\zeta'
+w_{k,vu}\frac{\zeta}{R}
\right)
\right]'
+w_{k,uv}\zeta'
+w_{k,uu}\frac{\zeta}{R},
\end{equation}
and its quadratic form is

\begin{equation}
2\pi\int_A^B
R\,\Diff^2 w_k
\left[
\left(\zeta',\frac\zeta R\right),
\left(\zeta',\frac\zeta R\right)
\right]dR.
\end{equation}
It is strictly positive unless $\zeta=0$. The uniformly elliptic mixed-boundary operator therefore has no kernel; its Fredholm index is zero, so it is an isomorphism. The Banach implicit-function theorem yields a local $C^\infty$ map $a\mapsto r_a$ in every $C^{k,\alpha}$ space with integer $k\ge2$ and $\alpha\in(0,1)$ \citep{Ciarlet1988}. Existence and uniqueness patch the local branches over $[A,\infty)$. At $a=A$, the same invertible operator extends the identity branch to a two-sided neighborhood.

The equilibrium constructed in Section B.3 is therefore the unique global radial energy minimizer, and $a\mapsto r_a$ has the stated smooth dependence in Hölder spaces.

Figure~\ref{fig:B2} determines the applicability of the strict-convexity and Jacobi-form argument. It applies to $W_k$ and $W_{\exp}$: any admissible critical point is variationally unique, and its Jacobi form has no kernel. The sign-changing reduced Hessians of NH-CG and $W_{\mathrm{eH}}$ preclude this argument. For $W_k$, the global shooting coverage of Section~B.3 and Jacobi invertibility give the unique smooth branch on $[A,\infty)$. For $W_{\exp}$, this reduced-convexity result does not establish global shooting coverage or pressure monotonicity. Section B.5 differentiates the $W_k$ solution family with respect to $a$ to determine whether the pressure required to maintain the cavity increases strictly with its radius.

\subsection{Strict increase of inner pressure with current cavity radius}

Section~B.4 produced a unique equilibrium family depending smoothly on $a$. We now differentiate both the solution and total energy with respect to $a$. The resulting pressure derivative contains both the stress-tangent term controlled by \TSTSM and the Baker-Ericksen term generated by the annular geometry. Their explicit combination and the ensuing algebraic estimate for $W_k$ prove that the full quadratic form is positive definite when $k\ge k_*$. Smooth dependence defines

\begin{equation}
\eta=\partial_a r_a,
\end{equation}
and, in the current principal frame,

\begin{equation}
h=(h_r,h_\theta,0)
=\left(\frac{\eta'}{r_a'},\frac{\eta}{r_a},0\right).
\end{equation}
For the value function

\begin{equation}
\mathcal V(a)=\mathcal I[r_a],
\end{equation}
differentiate under the integral and use the Euler--Lagrange equation of $r_a$, the natural condition $\sigma_r(B)=0$, and $\eta(A)=1$. The interior terms cancel. The remaining inner-boundary contribution is $2\pi A\,w_{k,v}(A)\eta(A)$. Since $w_{k,v}=u\sigma_r$ and $u(A)=a/A$, this reduces to

\begin{equation}
\mathcal V'(a)=2\pi a\,p_i(a).
\end{equation}
The current-volume form of the second variation is

\begin{equation}
\mathcal V''(a)
=\int_{\rm wall}
\left[
h^{\mathsf T}T_k h
+(\sigma_r+\sigma_\theta)h_rh_\theta
\right]dv,
\end{equation}
where $T_k$ is the symmetric true-stress-Hencky-strain tangent. On the other hand,

\begin{equation}
\mathcal V''(a)
=2\pi p_i(a)+2\pi a\frac{dp_i}{da}.
\end{equation}
Let $b=r_a(B)$. Radial equilibrium $d(r\sigma_r)/dr=\sigma_\theta$, together with $\sigma_r(b)=0$ and $\eta(a)=1$, gives

\begin{equation}
2\pi p_i
=2\pi\int_a^b
\frac{\sigma_\theta-\sigma_r}{r}\eta^2\,dr
+4\pi\int_a^b
\sigma_r\eta\,\partial_r\eta\,dr.
\end{equation}
Substitution into the second variation yields

\begin{equation}
2\pi a\frac{dp_i}{da}
=
\int_{\rm wall}
\left[
h^{\mathsf T}T_k h
+(\sigma_\theta-\sigma_r)h_\theta(h_r-h_\theta)
\right]dv.
\end{equation}
The Baker-Ericksen remainder is not sign definite by itself \citep{TruesdellNoll1965}. For $W_k$, its combination with the TSTS term is represented by the explicit symmetric matrix below; proving this matrix positive definite further uses the specific algebraic structure of $W_k$. The volume curvature of Section 3.3 satisfies

\begin{equation}
\psi''(s)=e^{s}+3e^{-s}\ge 2\sqrt{3},\qquad s=\log J,
\end{equation}
and therefore $c:=\psi''(s)-\frac1{4k}\ge 0$ whenever $k\ge k_*$. Put

\begin{equation}
x=v^2,
\qquad
y=u^2,
\qquad
\kappa=J-\frac3J-\frac1{2k},
\end{equation}
\begin{equation}
\alpha=2x+\kappa,
\qquad
\beta=2y+\kappa.
\end{equation}
After division of the current-volume density by the positive factor $A_k$, the reduced quadratic form is represented by

\begin{equation}
M=
\begin{pmatrix}
4x+c+k\alpha^2 &
y-x+c+k\alpha\beta\\
y-x+c+k\alpha\beta &
2x+2y+c+k\beta^2
\end{pmatrix}.
\end{equation}
Since $c\ge0$,

\begin{equation}
M_{11}\ge4v^2>0.
\end{equation}
Direct algebra gives

\begin{equation}
\det M
=\frac{8(k Q+6Jx^3)}{J^2x^2},
\qquad
Q=v^5\mathcal R(v,u),
\end{equation}
where

\begin{equation}
\mathcal R(v,u)
=v^7u^2+2v^6u^3+3v^5u^4+2v^4u^5+v^3u^6+3u^5-3v^4u-6v^3u^2-12v^2u^3+9v.
\end{equation}
Set $t=u/v>0$ and $w=v^4>0$. Then

\begin{equation}
\mathcal R=v(Aw^2+Bw+9),
\end{equation}
\begin{equation}
A=t^2(1+t+t^2)^2,
\qquad
B=3t\,\iota(t),
\qquad
\iota(t)=t^4-4t^2-2t-1.
\end{equation}
The discriminant factors exactly as

\begin{equation}
B^2-36A
=9t^2(t-1)^2(t+1)^2K(t),
\end{equation}
\begin{equation}
K(t)=t^4-6t^2-4t-3,
\qquad
K(t)-\iota(t)=-2(t^2+t+1).
\end{equation}
If $K(t)<0$ and $t\ne1$, the discriminant is negative, so the quadratic is strictly positive. At $t=1$ it reduces to $9(w-1)^2$. If $K(t)\ge0$, then $\iota(t)=K(t)+2(t^2+t+1)>0$, so $B>0$ and $Aw^2+Bw+9\ge9$. Thus

\begin{equation}
\mathcal R(v,u)\ge0.
\end{equation}
Because $J>0$ and $x>0$,

\begin{equation}
k Q+6Jx^3>0,
\qquad
\det M>0.
\end{equation}
Sylvester's criterion yields $M\succ0$ throughout the positive quadrant. Since $\eta(A)=1$, the incremental field induced by varying the cavity radius cannot vanish identically. Therefore

\begin{equation}
\frac{dp_i}{da}>0
\qquad(a>A).
\end{equation}
Figure~\ref{fig:B3}(a) shows this strictly increasing inner pressure for $W_k$ at $k=k_*$ and $k=2k_*$.
At the identity, the exact plane-strain Lamé slope is

\begin{equation}
p_i'(A)
=\frac{6\mu\bigl((B/A)^2-1\bigr)}
{A\bigl(1+3(B/A)^2\bigr)}>0.
\end{equation}
We have proved $p_i'(a)>0$ for every $a\ge A$, so inner pressure increases strictly with current cavity radius.

The pressure-slope comparison separates \TSTSM from annular pressure monotonicity. The exponentiated Hencky energy satisfies both \TSTSM and the Baker-Ericksen inequalities, while dense reverse integration gives a local maximum at $(a/A,p_i/\mu)=(2.350,0.4635)$ and a subsequent local minimum at $(9.912,0.3553)$ on the computed branch in Figure~\ref{fig:B3}(c). Along this branch, the positive-definite combination $M$ derived above is therefore not implied by \TSTSM alone. The computed $W_{\exp}$ branch in Figure~\ref{fig:B3}(d) is increasing on the sampled interval although $W_{\exp}$ fails global \TSTSM.

Strict monotonicity gives a one-to-one pressure-radius relation for $W_k$, but surjectivity onto all nonnegative pressures additionally requires ruling out a finite limiting pressure as $a\to\infty$. Section B.6 uses the growth of the energy to prove $p_i(a)\to\infty$ and then compares the endpoint information available for all four energies.

\subsection{Unbounded pressure growth and global inversion}

Sections~B.3-B.5 established a unique smooth branch and a positive pressure slope everywhere, but global pressure inversion also requires its range not to terminate at a finite value. We now use the exponential growth of $W_k$ to derive a large-cavity energy bound, exclude a finite pressure ceiling, and complete the inversion from prescribed radius to prescribed pressure. Strict convexity of the density and affine dependence of the prescribed boundary value imply convexity of

\begin{equation}
\mathcal V(a)=\mathcal I[r_a].
\end{equation}
Moreover,

\begin{equation}
\mathcal V'(a)=2\pi a\,p_i(a),
\qquad
\mathcal V(A)=0.
\end{equation}
Since $r_a(R)\ge a$,

\begin{equation}
\frac{r_a(R)}R\ge\frac aB.
\end{equation}
For $a>B\sqrt6$,

\begin{equation}
\mathcal V(a)
\ge
\frac{\pi E(B^2-A^2)}{k}
\left[
\exp\!\left(k\left((a/B)^2-6\right)\right)-1
\right].
\end{equation}
Convexity gives

\begin{equation}
\mathcal V'(a)\ge\frac{\mathcal V(a)}{a-A}.
\end{equation}
Hence

\begin{equation}
p_i(a)
\ge
\frac{E(B^2-A^2)}
{2k a(a-A)}
\left[
\exp\!\left(k\left((a/B)^2-6\right)\right)-1
\right]
\longrightarrow\infty.
\end{equation}
Together with $p_i(A)=0$, $p_i'(a)>0$, and smooth dependence, this shows that $p_i$ is a smooth, strictly increasing bijection from $[A,\infty)$ onto $[0,\infty)$ and completes the proof of Theorem B.1. Figure~\ref{fig:B3}(a) shows computed samples for $W_k$ at $k=k_*$ and $k=2k_*$. All four panels use the common shear-modulus normalization $p_i/\mu$ with $A=1$ and $B=2.5$. At $a/A=4.5$, the two $W_k$ values are $1.6599$ and $3.4603$; the theorem and the lower bound above show that both curves continue to rise and diverge.

The endpoint comparison gives three distinct outcomes. For NH-CG, radial equilibrium yields the exact identity
\begin{equation}
\frac{p_i}{\mu}
=\int_A^B\frac1R\left(1-\frac{v^2}{u^2}\right)dR
\le\log\frac BA.
\end{equation}
Thus its attainable pressure is bounded above. Figure~\ref{fig:B3}(b) shows the computed branch approaching $\log 2.5=0.9163$; at $a/A=4.5$, $p_i/\mu=0.8926$. For $W_{\exp}$, strict convexity makes the radial value function convex, and for $a\ge B$ its growth obeys
\begin{equation}
\mathcal V_{\exp}(a)
\ge\pi(B^2-A^2)\,
w_{\exp}\!\left(1,\frac aB\right).
\end{equation}
Together with $\mathcal V_{\exp}'(a)=2\pi a p_i(a)$, this lower bound implies $p_i(a)\to\infty$ along any smooth minimizing branch that extends to arbitrarily large $a$. Figure~\ref{fig:B3}(d) shows the computed branch crossing the NH-CG ceiling and continuing to $p_i/\mu=1.695\times10^5$ at $a/A=4.397$. For $W_{\mathrm{eH}}$, Figure~\ref{fig:B3}(c) gives the computed extrema described in Section~B.5, and its far-field asymptotic is undetermined.

Among the four energies, only $W_k$ is both polyconvex and globally \TSTSM, and Theorem~B.1 gives it a complete pressure-radius inversion for the annulus. Polyconvexity supplies the convexity-in-minors condition used in standard variational existence theory, while radial uniqueness uses the strict convexity of the plane-strain reduction proved separately in Section~B.2. Global \TSTSM supplies pointwise radial-tangent positivity, and the explicit algebra for $W_k$ in Section~B.5 further combines the TSTS and Baker-Ericksen terms into $M\succ0$. Global shooting, Jacobi invertibility, and exponential growth then close existence, smooth uniqueness, and the pressure range, respectively. The three comparators show that these links cannot replace one another. The energy $W_{\exp}$ is polyconvex with a strictly convex reduction but fails global \TSTSM, so the positive-definiteness argument used for $W_k$ in Section~B.5 does not apply to it. The NH-CG energy is polyconvex while its reduced Hessian changes sign, and its pressure has an exact finite ceiling. The energy $W_{\mathrm{eH}}$ satisfies \TSTSM while its reduction is not jointly convex; its computed pressure branch is nonmonotone, and its far-field asymptotic is undetermined.

Appendix C turns to a different boundary-value problem, incompressible spherical inflation. There the Demiray family with $b\ge b_*$ has a globally invertible pressure-stretch relation, whereas incompressible neo-Hooke has a limit point. This result applies to $W_k|_{J=1}$ for $k\ge k_*$ and to $W_{\exp}|_{J=1}$; the exponentiated Hencky energy has the same inversion when $k_{\mathrm{eH}}>3/8$ by a separate closed formula.

\FloatBarrier
\setcounter{figure}{0}
\renewcommand{\thefigure}{C\arabic{figure}}
\renewcommand{\theHfigure}{C\arabic{figure}}
\setcounter{theorem}{0}
\renewcommand{\thetheorem}{C.\arabic{theorem}}
\renewcommand{\theHtheorem}{C.\arabic{theorem}}
\section{\texorpdfstring{Global pressure inversion for the incompressible Demiray family}{Global pressure inversion for the incompressible Demiray family}}

Under uniform inflation of an incompressible spherical membrane, a single surface stretch $\lambda$ determines the three principal stretches $(\lambda^{-2},\lambda,\lambda)$. The traction-free condition in the thickness direction fixes the incompressibility multiplier, after which the Laplace relation converts the hoop stress into an internal pressure $p(\lambda)$. Thin balloons often exhibit a pressure maximum at moderate inflation followed by a jump. Incompressible neo-Hooke provides an explicit example: $p\propto(\lambda^{-1}-\lambda^{-7})$ has a unique maximum at $\lambda=7^{1/6}$. Convexity with respect to $I_1$, or polyconvexity of the three-dimensional incompressible law, does not by itself exclude this limit point.

Consider the incompressible Demiray family
\begin{equation}
W_{\mathrm D,b}(F)
=\frac{\mu}{2b}\left[\exp\bigl(b(I_1-3)\bigr)-1\right],
\qquad b>0,\qquad J=1.
\end{equation}
It contains two energies from Appendix~B after isochoric restriction: $W_k|_{J=1}=W_{\mathrm D,b}$ with $b=k$, and $W_{\exp}|_{J=1}=W_{\mathrm D,1}$. The exponentiated Hencky energy is outside this $I_1$ family but also has an invertible spherical-membrane pressure for $k_{\mathrm{eH}}>3/8$. Consequently, spherical-membrane pressure inversion is shared by these constitutive laws. Set $b_*:=1/(8\sqrt3)=k_*$. Section~C.1 derives the Demiray pressure law, and Section~C.2 proves global pressure inversion for every $b\ge b_*$.

\begin{figure}[ht]
\centering
\includegraphics[width=\textwidth]{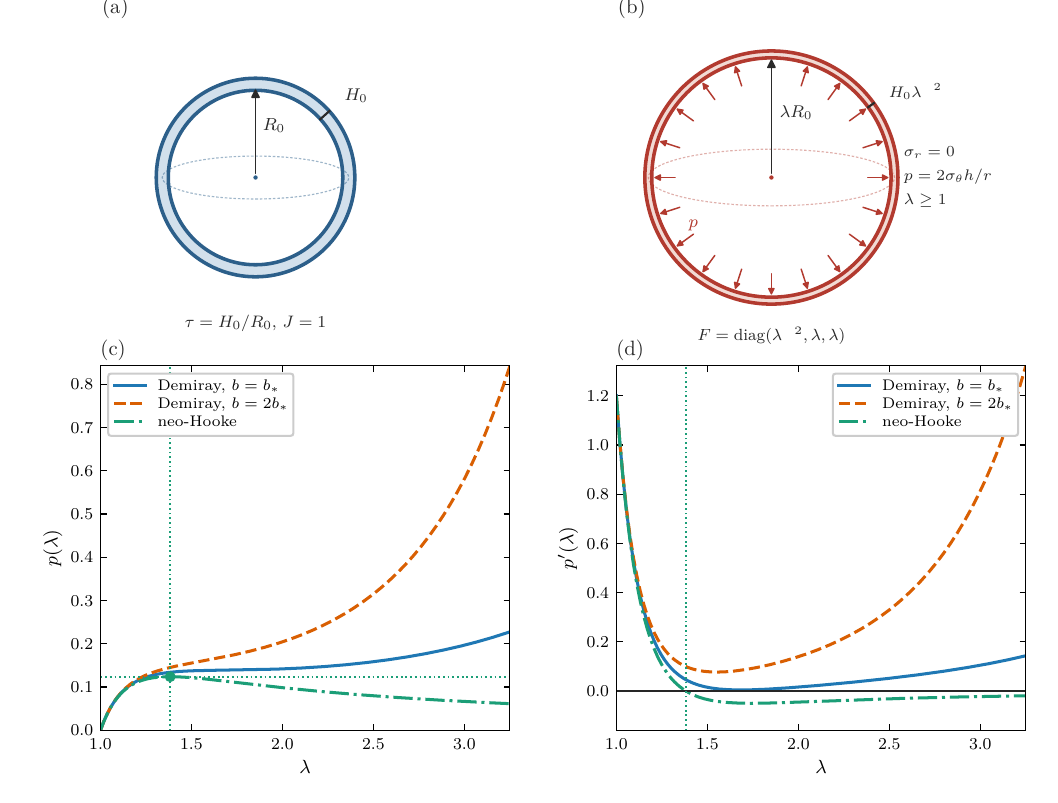}
\caption{Kinematics, limit point, and pressure-stretch inversion for the incompressible Demiray family. Panel (a) shows the reference mid-surface radius $R_0$, thickness $H_0$, and thickness ratio $\tau=H_0/R_0$. Panel (b) shows the isochoric inflation, with $F=\operatorname{diag}(\lambda^{-2},\lambda,\lambda)$, current radius $\lambda R_0$, and current thickness $h=H_0\lambda^{-2}$; the thickness traction vanishes, $\sigma_r=0$, and the Laplace relation is $p=2\sigma_\theta h/r$. In panel (c), solid blue and dashed orange are the Demiray pressures at $b=b_*$ and $b=2b_*$, and dash-dotted green is the incompressible neo-Hooke pressure under the same spherical kinematics. The neo-Hooke curve reaches its maximum at $\lambda=7^{1/6}$; the marker and dotted horizontal and vertical lines identify this limit point. Panel (d) shows the corresponding pressure derivatives: both Demiray curves remain positive, whereas the neo-Hooke derivative becomes negative beyond the limit point. The parameters are $E=\mu/2=1$ and $\tau=0.05$. Theorem~C.1 proves that, for every $b\ge b_*$, $p'(\lambda)>0$ throughout $\lambda\ge1$ and the pressure increases from zero to infinity. Since $W_k|_{J=1}$ corresponds to $b=k$ and $W_{\exp}|_{J=1}$ to $b=1$, the theorem applies to both restricted energies.}
\label{fig:C1}
\end{figure}

\subsection{Stress and pressure under isochoric spherical deformation}

Appendix B completed the pressure inversion for the annular boundary-value problem. We now work directly with $W_{\mathrm D,b}$ and derive its hoop Cauchy stress and internal pressure from the traction-free condition in the thickness direction and the Laplace relation. The identities

\begin{equation}
\left.W_k(F)\right|_{J=1}
=W_{\mathrm D,k}(F),\qquad
\left.W_{\exp}(F)\right|_{J=1}
=W_{\mathrm D,1}(F)
\end{equation}
place both restricted energies in the same family \citep{Fung1967,Demiray1972}. Explicitly,
\begin{equation}
W_{\mathrm D,b}(F)
=\frac{\mu}{2b}
\left[
\exp\!\left(b(I_1-3)\right)-1
\right].
\end{equation}

Consider a thin spherical membrane with reference radius $R_0$, reference thickness $H_0$, and

\begin{equation}
F=\operatorname{diag}(\lambda^{-2},\lambda,\lambda),
\qquad
\lambda\ge1,
\qquad
\tau=\frac{H_0}{R_0}>0.
\end{equation}
Figure~\ref{fig:C1}(a) shows this reference membrane.
Then

\begin{equation}
I_1=2\lambda^2+\lambda^{-4}.
\end{equation}
Eliminating the incompressibility multiplier with the traction-free condition in the thickness direction gives

\begin{equation}
\sigma_\theta
=\mu
\exp\!\left(b(2\lambda^2+\lambda^{-4}-3)\right)
(\lambda^2-\lambda^{-4}).
\end{equation}
The current thickness is $H_0\lambda^{-2}$ and the current radius is $\lambda R_0$. The Laplace pressure is therefore

\begin{equation}
p(\lambda)
=2\mu\tau(\lambda^{-1}-\lambda^{-7})
\exp\!\left(b(2\lambda^2+\lambda^{-4}-3)\right).
\end{equation}
Figure~\ref{fig:C1}(b) shows the inflated membrane and the Laplace balance that produces this pressure. Figure~\ref{fig:C1}(c) plots the same pressure at $b=b_*$ and $b=2b_*$, together with the neo-Hooke limit $p_{\mathrm{NH}}(\lambda)=2\mu\tau(\lambda^{-1}-\lambda^{-7})$.
Equivalently, if

\begin{equation}
\widehat W_{\mathrm D,b}(\lambda)
=W_{\mathrm D,b}\!\left(\operatorname{diag}(\lambda^{-2},\lambda,\lambda)\right),
\end{equation}
then

\begin{equation}
p(\lambda)=\tau\lambda^{-2}\widehat W_{\mathrm D,b}'(\lambda).
\end{equation}
For comparison, the exponentiated Hencky energy gives under the same kinematics
\begin{equation}
p_{\mathrm{eH}}(\lambda)
=12\mu\tau\lambda^{-3}
\exp\!\left(6k_{\mathrm{eH}}(\log\lambda)^2\right)\log\lambda.
\end{equation}
With $t=\log\lambda$, the sign of its derivative is the sign of $1-3t+12k_{\mathrm{eH}}t^2$, which is positive for every $t\ge0$ when $k_{\mathrm{eH}}>3/8$. Its pressure starts from zero and diverges as $\lambda\to\infty$. This model therefore shares the spherical inversion while lying outside the Demiray family and lacking global rank-one convexity \citep{Neff2015ExpHencky}.
\subsection{Strict pressure increase along the inflation branch}

It remains to prove $p'(\lambda)>0$ for $\lambda\ge1$ and $p(\lambda)\to\infty$; together with $p(1)=0$, these properties imply global pressure-stretch inversion.

\begin{theorem}\label{thm:C1} Let $\mu>0$, $\tau>0$, and $b\ge b_*$. For the incompressible Demiray energy $W_{\mathrm D,b}$, every prescribed internal pressure $p_*\ge0$ determines exactly one spherical inflation stretch $\lambda\ge1$, and that stretch depends smoothly on $p_*$. Equivalently,

\begin{equation}
p:[1,\infty)\to[0,\infty)
\end{equation}
is a smooth, strictly increasing bijection whose inverse admits a one-sided smooth extension at $p=0$.
\end{theorem}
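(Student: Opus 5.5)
We reduce everything to the sign of a single one-variable polynomial. From the closed form of Section~C.1,
\begin{equation*}
p(\lambda)=2\mu\tau\,f(\lambda)\,e^{b\varphi(\lambda)},\qquad
f(\lambda)=\lambda^{-1}-\lambda^{-7},\qquad
\varphi(\lambda)=2\lambda^2+\lambda^{-4}-3 .
\end{equation*}
Then $p'=2\mu\tau e^{b\varphi}\bigl(f'+bf\varphi'\bigr)$, with $f'=-\lambda^{-2}+7\lambda^{-8}$ and $\varphi'=4(\lambda-\lambda^{-5})$. Since $(\lambda^{-1}-\lambda^{-7})(\lambda-\lambda^{-5})=\lambda^{-12}(\lambda^6-1)^2$, we obtain
\begin{equation*}
\lambda^{12}\bigl(f'+bf\varphi'\bigr)=4b(\lambda^6-1)^2-\lambda^{10}+7\lambda^4 .
\end{equation*}
The right-hand side is nondecreasing in $b$, so it suffices to treat $b=b_*$, where $4b_*=1/(2\sqrt3)$. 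With $y=\lambda^2\ge1$ and after division by $y^2>0$, the claim $p'>0$ on $[1,\infty)$ becomes
\begin{equation*}
\rho(y):=\frac{(y^3-1)^2}{2\sqrt3\,y^2}-y^3+7>0\qquad(y\ge1).
\end{equation*}

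I would prove this in three ranges. Put $z=(y^3-1)/y=y^2-y^{-1}\ge0$, so that $y^3=1+yz$ and $\rho=z^2/(2\sqrt3)-yz+6$.

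\emph{Small $y$.} AM--GM gives $z^2/(2\sqrt3)+6\ge 2\cdot3^{1/4}z$. Hence $\rho\ge z(2\cdot 3^{1/4}-y)$, which is positive whenever $y<2\cdot3^{1/4}\approx2.63$, except at $y=1$. At $y=1$ we have $\rho=6$ directly, so $p'(1)>0$.

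\emph{Large $y$.} We have $z^2/(2\sqrt3)\ge yz$ as soon as $z\ge2\sqrt3\,y$, that is, once $y^2-y^{-1}\ge2\sqrt3\,y$. This holds for $y\ge3.6$, and then $\rho\ge6$.

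\emph{Middle range.} The gap $y\in[2.6,3.6]$ is the main obstacle, since the crude bounds above do not close it. There I would work with the polynomial $2\sqrt3\,y^2\rho(y)=(y^3-1)^2-2\sqrt3(y^5-7y^2)$ on a compact interval. The plan is to bound it below by a subdivision argument: use monotone upper and lower bounds for each monomial on a few subintervals, or alternatively a Sturm-sequence or root-count check.

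A cleaner route, tried first, is to minimize $\rho$ over $z$ at fixed $y$ as a quadratic, and then use the relation between $z$ and $y$ to find the minimum over $y$. Numerically the minimum of $\rho$ in this range looks comfortably positive (about $2$ near $y\approx3$), so a coarse interval bound should suffice.

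Once $p'>0$ on $[1,\infty)$ is established, the rest is routine. We have $p(1)=0$, and $p(\lambda)\ge2\mu\tau(\lambda^{-1}-\lambda^{-7})e^{b(2\lambda^2-3)}\to\infty$. Continuity and strict monotonicity then give a bijection $[1,\infty)\to[0,\infty)$. Since $p$ is real-analytic on $(0,\infty)$, the inverse function theorem yields smoothness of $p^{-1}$ on $(0,\infty)$. Because $p'(1)=12\mu\tau>0$ and $p$ extends smoothly to a two-sided neighbourhood of $\lambda=1$, the local inverse there supplies the one-sided smooth extension of the inverse at $p=0$.
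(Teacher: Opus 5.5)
Your reduction matches the paper's, and it is correct as far as it goes. The identity $\lambda^{12}(f'+bf\varphi')=7\lambda^4-\lambda^{10}+4b(\lambda^6-1)^2$ is right, as are monotonicity in $b$, the choice $b=b_*$, and the endpoint facts $p(1)=0$, $p'(1)=12\mu\tau$, $p\to\infty$. The final inverse-function argument is also fine, and your bounds on $1\le y<2\cdot3^{1/4}$ and $y\ge3.6$ are valid.

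\textbf{The gap.} The substance of the theorem is the sign of $\rho$ on the middle range (really $2\cdot3^{1/4}\le y\lesssim3.544$), and there you only announce a plan. Your ``cleaner route'' cannot work. Minimizing $z^2/(2\sqrt3)-yz+6$ over $z$ at fixed $y$ gives exactly $6-\tfrac{\sqrt3}{2}y^2$. That is the same bound as your AM--GM step, and it turns negative precisely at $y=2\cdot3^{1/4}$. Any argument in the gap must use the coupling $z=y^2-y^{-1}$, which that route discards.

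Your numerical heuristic is also off. In fact $\rho(3)\approx1.68$, and the minimum of $\rho$ is about $1.14$ near $y\approx2.67$, inside the gap. There each of the two terms of $\rho$ changes at a rate of roughly $20$ or more per unit of $y$. Naive monotone endpoint bounds would therefore need a mesh of width about $0.02$, i.e.\ dozens of cells, not ``a few subintervals.'' Until that check (or a Sturm count) is actually carried out, $p'>0$ is not established for $\lambda\in[1.62,1.89]$.

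\textbf{The paper's idea.} For $y^3\le7$ both terms of $\Phi$ are nonnegative, so only $y^3>7$ matters. On that range both sides of $(y^3-1)^2>2\sqrt3\,y^2(y^3-7)$ are positive, so one may square. This gives the integer polynomial $S(y)=(y^3-1)^4-12y^4(y^3-7)^2$. Expanding in $w=y^3-7$, with $y^3-1=w+6$, yields $S=w^2M(y)+864w+1296$, where
$M(y)=y^6-12y^4+10y^3+97=y^2(y^2-6)^2+(10y^3-36y^2+97)$.
The last bracket has minimum $697/25$ at $y=12/5$, so $M>0$ and $S\ge1296$ whenever $y^3\ge7$. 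This settles your middle and large ranges together, with no numerics. The squaring also removes the $\sqrt3$, which would otherwise make a Sturm-sequence check awkward.
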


\noindent\textit{Proof.} Put

\begin{equation}
e(\lambda)
=\exp\!\left(b(2\lambda^2+\lambda^{-4}-3)\right).
\end{equation}
For every $\lambda>0$,

\begin{equation}
p'(\lambda)
=\frac{2\mu\tau e(\lambda)}{\lambda^{12}}
\Phi(\lambda,b),
\end{equation}
where

\begin{equation}
\Phi(\lambda,b)
=7\lambda^4-\lambda^{10}
+4b(\lambda^6-1)^2.
\end{equation}
At $\lambda=1$,

\begin{equation}
p'(1)=12\mu\tau>0.
\end{equation}
For $1<\lambda\le7^{1/6}$, both $\lambda^4(7-\lambda^6)$ and the $b$ term are nonnegative, and their sum is strictly positive. For $\lambda>7^{1/6}$, $\Phi$ is increasing in $b$, so it suffices to set $b=b_*$. With $v=\lambda^2$, the desired inequality is

\begin{equation}
(v^3-1)^2>2\sqrt3\,v^2(v^3-7).
\end{equation}
Both sides are positive in this range. Squaring and subtracting gives

\begin{equation}
S(v)
=(v^3-1)^4-12v^4(v^3-7)^2.
\end{equation}
Exact polynomial division yields

\begin{equation}
S(v)
=(v^3-7)^2M(v)+864(v^3-7)+1296,
\end{equation}
\begin{equation}
M(v)
=v^6-12v^4+10v^3+97
=v^2(v^2-6)^2+K(v),
\end{equation}
\begin{equation}
K(v)=10v^3-36v^2+97.
\end{equation}
On $v\ge0$, the minimum of $K$ occurs at $v=12/5$, and

\begin{equation}
K(12/5)=\frac{697}{25}>0.
\end{equation}
Thus $M(v)>0$. For $v^3\ge7$,

\begin{equation}
S(v)\ge1296>0.
\end{equation}
Consequently $\Phi(\lambda,b)>0$ and

\begin{equation}
p'(\lambda)>0
\qquad(\lambda\ge1).
\end{equation}
Figure~\ref{fig:C1}(d) shows this derivative at $b=b_*$ and $b=2b_*$, together with $p_{\mathrm{NH}}'(\lambda)=2\mu\tau(-\lambda^{-2}+7\lambda^{-8})$. Both Demiray curves stay positive on $1\le\lambda\le3.25$. The neo-Hooke derivative changes sign at $\lambda=7^{1/6}=1.3831\ldots$, where the pressure reaches the limit-point value $0.12395$.
Finally,

\begin{equation}
p(1)=0,
\qquad
p(\lambda)
\sim2\mu\tau e^{-3b}\lambda^{-1}e^{2b\lambda^2}
\to\infty
\quad(\lambda\to\infty).
\end{equation}
Figure~\ref{fig:C1}(c) shows the corresponding pressures, computed at $E=\mu/2=1$ and $\tau=0.05$. The Demiray curves increase strictly from the identity; at $\lambda=3.25$ the values are $0.22758$ at $b=b_*$ and $0.84238$ at $b=2b_*$, with derivative $0.14367$ at $b=b_*$. The neo-Hooke pressure has already fallen to $0.06149$ with derivative $-0.01882$, so a pressure below the peak can correspond to two stretches. For every $\lambda>1$ the explicit formula is strictly increasing in $b$.
Strict monotonicity, endpoint growth, and the inverse-function theorem prove the claim. $\square$

Thus the entire Demiray family with $b\ge b_*$ has a globally invertible spherical inflation pressure, whereas its neo-Hooke limit has a descending branch and two stretches for pressures below the peak. The result applies to $W_k|_{J=1}$ and $W_{\exp}|_{J=1}$, and the closed calculation in Section~C.1 gives the same inversion for $W_{\mathrm{eH}}$. These hardened constitutive laws therefore share monotone spherical pressure growth, in contrast to the neo-Hooke limit point.

\normalcolor
\section*{Acknowledgments}
The stored-energy family $W_k$ and the two threshold hypotheses were first isolated by a symbolic-search program written by the authors.
The authors used OpenAI GPT-5.6 to review proof drafts, to check code, and to assist with language editing.
The authors reviewed all outputs and take full responsibility for the published article.

We have found $W_k$ on 31 July 2026; the full checks have been accomplished by 15 August 2026.
In the process of submitting the paper, the authors were informed that another solution to the challenge by Neff has been obtained independently by different means and with different techniques by a group from Stanford University \citep{XieJaviliNeffLinder2026}.

K.Z.\ acknowledges support from the National Natural Science Foundation of China under grant No.~12372173 and the Natural Science Foundation of Shanghai under grant No.~23ZR1468600. Y.L.\ thanks the Natural Science Foundation of Shanghai under grant No.~24ZR1425400.
\normalcolor
\clearpage
\bibliographystyle{unsrtnat}
\bibliography{references}
\end{document}